\theoremstyle{plain}
\newtheorem{theo}{Theorem}[section]
\newtheorem*{theo*}{Theorem}
\newtheorem{prop}[theo]{Proposition}
\newtheorem{lemm}[theo]{Lemma}
\newtheorem{coro}[theo]{Corollary}
\newtheorem{conj}[theo]{Conjecture}
\theoremstyle{definition}
\newtheorem{rema}[theo]{Remark}
\newcommand{\Z}{\mathbf{Z}}
\newcommand{\Q}{\mathbf{Q}}
\newcommand{\Zp}{\mathbf{Z}_p}
\newcommand{\Qp}{\mathbf{Q}_p}
\DeclareMathOperator{\gal}{Gal}
\DeclareMathOperator{\GL}{GL}
\let\epsilon\varepsilon
\let\varepsilon\epsilon
\let\ker\relax
\DeclareMathOperator{\ker}{Ker}
\DeclareMathOperator{\Fil}{Fil}
\DeclareMathOperator{\coker}{Coker}
\DeclareFontFamily{U}{wncy}{}
\DeclareFontShape{U}{wncy}{m}{n}{<->wncyr10}{}
\DeclareSymbolFont{mcy}{U}{wncy}{m}{n}
\DeclareMathSymbol{\Sha}{\mathord}{mcy}{"58} 
\newcommand{\GG}{\mathcal{G}}
\newcommand{\HH}{\mathcal{H}}
\newcommand{\II}{\mathcal{I}}
\newcommand{\LL}{\mathcal{L}}
\newcommand{\OO}{\mathcal{O}}
\newcommand{\XX}{\mathcal{X}}
\newcommand{\Dcrisv}{\mathbf{D}_{\mathrm{cris},v}}
\DeclareMathOperator{\fil}{Fil}
\DeclareMathOperator{\rank}{rank}
\DeclareMathOperator{\coh}{H}
\DeclareMathOperator{\col}{Col}
\newcommand{\Fcyc}{F(\mu_{p^\infty})}
\newcommand{\Fcycn}{F(\mu_{p^n})}
\newcommand{\Fvcycn}{F_v(\mu_{p^n})}
\newcommand{\vp}{\varphi}
\newcommand{\Xloc}{\XX_{\mathrm{loc}}}
\newcommand{\bc}{\mathbf{c}}
\DeclareMathOperator{\sel}{Sel}
\DeclareMathOperator{\loc}{loc}
\DeclareMathOperator{\ord}{ord}
\newcommand{\p}{\mathfrak{p}}
\newcommand{\uI}{\underline{I}}
\newcommand{\uJ}{\underline{J}}
\newcommand{\cO}{\mathcal{O}}
\newcommand{\Dcris}{\mathbf{D}_{\mathrm{cris}}}
\begin{document}
\author[A. Lei]{Antonio Lei}
\address{(Lei) D\'epartement de Math\'ematiques et de Statistiques, Universit\'e Laval, Pavillon Alexandre-Vachon, 1045 Avenue de la M\'edecine, Qu\'ebec, QC, Canada G1V 0A6}
\email{antonio.lei@mat.ulaval.ca}

\author[G. Ponsinet]{Gautier Ponsinet}
\address{(Ponsinet) Max Planck Institut for Mathematics, Vivatsgasse 7, 53111 Bonn, Germany}
\email{gautier.ponsinet@mpim-bonn.mpg.de}

\thanks{The authors' research is supported by the NSERC Discovery Grants Program 05710.}

\title[On the ranks of supersingular abelian varieties]{On the Mordell-Weil ranks of supersingular abelian varieties in
cyclotomic extensions}
\date{\today}
\subjclass[2010]{11R23 (primary), 11G10, 11R18}
\keywords{Iwasawa theory, supersingular primes, abelian varieties, Mordell-Weil ranks}
\begin{abstract}
	Let $F$ be a number field unramified at an odd prime $p$ and
	$F_\infty$ be the $\Zp$-cyclotomic extension of $F$. Let $A$ be
	an abelian variety defined over $F$ with good supersingular
	reduction at all primes of $F$ above $p$.  B\"uy\"ukboduk and
	the first named author have defined modified Selmer groups
	associated to $A$ over $F_\infty$. Assuming that the Pontryagin
	dual of these Selmer groups are torsion
	$\Zp[[\gal(F_\infty/F)]]$-modules, we give an explicit sufficient condition for the rank of the
	Mordell-Weil group $A(F_n)$ to be bounded as $n$
	varies.
\end{abstract}
\maketitle
\tableofcontents

\section*{Introduction}

Let $F$ be a number field. For an odd prime number $p$, let
$(F_n)_{n \geq 0}$ be the tower of number fields such that $F_\infty
= \cup_n F_n$ is the $\Zp$-cyclotomic extension and $\gal(F_n/F)\simeq
\Z/p^n\Z$ (see \S
\ref{subsec:cyclotomic}). 

Let $A$ be an abelian variety defined over $F$. By Mordell-Weil's
theorem, the groups $A(F_n)$ of $F_n$-rational points of $A$ are 
finitely generated abelian groups \cite[Appendix II]{MumfordAbVar}. One
may wonder how these groups vary as $n$ varies.

To study the asymptotic growth of their ranks, a strategy (developed by
Mazur~\cite{Mazur72}) goes as follows . One studies the structure as a
$\Zp[[\gal(F_\infty/F)]]$-module of the Selmer group
$\sel_p(A/F_\infty)$ (see \S \ref{subsec:pSel}), as well as its relation
to the Selmer groups $\sel_p(A/F_n)$ over $F_n$ through Galois descent.
One then deduces information about the rational points via the exact
sequence 
\begin{equation}  \label{seq:intro}
0 \rightarrow A(F_n)\otimes\Qp/\Zp \rightarrow \sel_p(A/F_n) \rightarrow
\Sha_p(A/F_n) \rightarrow 0,
\end{equation}
where $\Sha_p(A/F_n)$ is the $p$-primary component of the
Tate-Shafarevitch group of $A$ over $F_n$.

When $A$ has good ordinary reduction at primes above $p$, the Pontryagin
dual of $\sel_p(A/F_\infty)$ is conjectured to be a torsion
$\Zp[[\gal(F_\infty/F)]]$-module (proved by Kato when $A$ is an elliptic
curve defined over $\Q$ and $F/\Q$ is an abelian extension). Under this
conjecture, Mazur's control theorem on the Selmer groups implies that
the rank of the Mordell-Weil group $A(F_n)$ is bounded independently of
$n$.

However, this approach does not work without the ordinarity assumption.
Firstly, Mazur's control theorem does not hold. Secondly, the Pontryagin
dual of the Selmer group $\sel_p(A/F_\infty)$ is no longer expected to
be a torsion $\Zp[[\gal(F_\infty/F)]]$-module. In the case where $A$ is
an elliptic curve with $a_p=0$ and $F=\Q$, Kobayashi~\cite{Kobayashi}
defined two modified Selmer groups (often referred as plus and minus
Selmer groups in the literature) and proved that they do satisfy the two
aforementioned properties, namely the cotorsioness over an appropriate
Iwasawa algebra and a control theorem à la Mazur. As a consequence, one
deduces that the rank of the Mordell-Weil group $A(F_n)$ is bounded
independently of $n$.  See \cite[Corollary~10.2]{Kobayashi}.
Alternatively, it is possible to deduce the same result using Kato's
Euler system in~\cite{Kato04} and Rohrlich's non-vanishing results on
the complex $L$-values of $A$ in \cite{Rohrlich84}. See the discussion
after Theorems~1.19 and~1.20 in \cite{Greenbergintro}.

We note that a different approach was developed by Perrin-Riou
\cite[\S6]{PR90} to study the Mordell-Weil ranks of an elliptic curve
with supersingular reduction and $a_p=0$.  She showed that when a certain
algebraic $p$-adic $L$-function is non-zero and does not vanish at the
trivial character, then the Mordell-Weil ranks of the elliptic curve
over $F_n$ are bounded independently of $n$. Kim \cite{Kimrank} as well
as Im and Kim \cite{ImKim} have generalized this method to abelian
varieties which can have mixed reduction types at primes above $p$.
Furthermore, unlike the present article, they did not assume that $p$ is
unramified in $F$. Following Perrin-Riou's construction, Im and Kim
defined certain algebraic $p$-adic $L$-functions and showed that when
these are non-zero, then the Mordell-Weil ranks of the abelian variety
over $F_n$ are bounded by certain explicit polynomials in $n$.

The goal of the present article is study the Mordell-Weil ranks of an
abelian variety $A$ over $F_n$ under the assumptions that $p$ is
unramified in $F$ and that $A$ is supersingular at all primes of $F$
above $p$. We make use of the signed Selmer groups developed by
B\"uy\"ukboduk and the first named author in~\cite{BLIntegral} (see
\S\ref{subsec:signedSelmer} for a review of their constructions; these
Selmer groups generalize Kobayashi's plus and minus Selmer groups). Our
main result is the following. 
\begin{theo*}[{{Theorem~\ref{thm:key}}}]
	Assume that the Pontryagin dual of the signed Selmer
	groups of $A$ over $F_\infty$ are torsion
	$\Zp[[\gal(F_\infty/F)]]$-modules. There exist an explicit sufficient condition on the Coleman maps attached to $A$ which ensure that the rank of the
	Mordell-Weil group of the dual abelian variety $A^\vee(F_n)$ is bounded as $n$ varies.
\end{theo*}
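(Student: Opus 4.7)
The plan is to adapt the strategy of Perrin-Riou and Kobayashi from the elliptic curve $a_p=0$ setting to the signed Selmer machinery of B\"uy\"ukboduk and Lei. Writing $\Lambda = \Zp[[\Gamma]]$ with $\Gamma=\gal(F_\infty/F)$, let $X^{\uI}$ denote the Pontryagin dual of $\sel^{\uI}_p(A/F_\infty)$, which by hypothesis is $\Lambda$-torsion. The first step is to recall the defining exact sequence for $\sel^{\uI}_p(A/F_\infty)$: each signed local condition at a prime $v\mid p$ is the kernel of a Coleman map $\col^{I_v}_v$, and Pontryagin-dualizing the resulting sequence expresses $X^{\uI}$ in terms of $\sel_p(A/F_\infty)^\vee$ and the images of the Coleman maps. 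The torsion hypothesis thus bounds the $\Lambda$-corank of $\sel_p(A/F_\infty)^\vee$ by the ranks of those images.

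Next, I would pass to finite levels. Since $X^{\uI}$ is $\Lambda$-torsion with characteristic element $f$, its $\gal(F_\infty/F_n)$-coinvariants have $\Zp$-corank controlled by the number of zeros of $f$ of the form $\zeta-1$ with $\zeta\in\mu_{p^\infty}$, which is finite and bounded independently of $n$. Combined with a Mazur-style control theorem for the signed Selmer groups --- available in the B\"uy\"ukboduk--Lei framework --- this yields a bound on the $\Zp$-corank of $\sel^{\uI}_p(A/F_n)$ that is uniform in $n$.

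The decisive third step is to compare $\sel_p(A/F_n)$ with $\sel^{\uI}_p(A/F_n)$: the defect is governed by the cokernel of a level-$n$ avatar of the product of Coleman maps $\prod_{v\mid p}\col^{I_v}_v$. The \emph{sufficient condition on the Coleman maps} I would impose is that a certain determinant built from these maps and the global localization --- playing the role of an algebraic $p$-adic $L$-function in the spirit of Perrin-Riou --- does not vanish at the trivial character of $\Gamma$. Under this hypothesis the finite-level Coleman-image contribution remains bounded in $n$, so the $\Zp$-corank of $\sel_p(A/F_n)$ is bounded uniformly in $n$.

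Finally, via the Kummer exact sequence \eqref{seq:intro} --- applied on the dual side using local Tate duality to transfer Selmer information for $A$ into rank information for $A^\vee$ --- the bound on the $\Zp$-corank of $\sel_p(A/F_n)$ gives that $\rank_\Z A^\vee(F_n)$ is uniformly bounded in $n$. The hardest step, I expect, is the third one: Mazur's control theorem fails for $\sel_p$ in the supersingular setting, so one must isolate the minimal non-vanishing condition on the Coleman maps at the trivial character that cancels the potentially unbounded descent error, and verify that it indeed suffices.
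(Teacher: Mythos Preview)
Your overall architecture diverges from the paper's in two substantive ways, and the second of them contains a genuine gap.

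First, the paper does \emph{not} use a control theorem for the signed Selmer groups and then compare $\sel^{\uI}_p(A/F_n)$ with $\sel_p(A/F_n)$. Instead it works directly with the Poitou--Tate sequence \eqref{diag:pPT} for the \emph{classical} $p$-Selmer group at level $n$ together with the control theorem for the \emph{fine} Selmer group (Lemma~\ref{lemm:control}). This reduces the problem to bounding the $\Zp$-rank of the purely local object
\[
\Xloc(F_n)=\coker\Bigl(\coh^1_{\mathrm{Iw},\Sigma}(F,T)\longrightarrow \prod_{v\mid p}\frac{\coh^1(F_{n,v},T)}{A(F_{n,v})\otimes\Zp}\Bigr),
\]
and the torsion hypothesis on the signed Selmer groups is used only to guarantee that $\coh^1_{\mathrm{Iw},\Sigma}(F,T)$ has $\Lambda$-rank $g[F:\Q]$ (Lemma~\ref{lemm:PoitouTate}), so that one can choose generic classes $c_1,\ldots,c_{g[F:\Q]}$.

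Second, and more importantly, your ``explicit sufficient condition'' is misidentified. You propose non-vanishing of a Coleman determinant at the \emph{trivial} character. The paper's condition \eqref{eq:key} is that, for all sufficiently large $n$ and every character $\theta$ of $\Gamma$ of conductor $p^{n+1}$,
\[
\sum_{\uJ\in\II}\bigl(H_{\uI_0,\uJ,n}\,\col_{T,\uJ}(\bc)\bigr)(\theta)\ne 0,
\]
where $H_{\uI_0,\uJ,n}$ is a minor of the logarithmic matrix. Non-vanishing at the trivial character says nothing about the $e_\theta$-components of $\Xloc(F_n)$ for $\theta$ of growing conductor, and it is precisely these components that control the growth $\rank_{\Zp}\Xloc(F_n)-\rank_{\Zp}\Xloc(F_{n-1})$. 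Your Step~3 claim that ``the finite-level Coleman-image contribution remains bounded in $n$'' under a trivial-character hypothesis is therefore unjustified: the comparison between the classical and signed local conditions at level $n$ has $\Zp$-rank that typically \emph{grows} with $n$, and controlling it requires the exterior-algebra/dual-exponential criterion of Proposition~\ref{prop:vanish2}, which is exactly what produces the sum over $\uJ$ with the logarithmic-matrix minors. That proposition, not a descent argument, is the engine of the proof.
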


We show further that when the Frobenius on the Dieudonné module of $A$ at $p$ can be expressed as certain block matrix, then the explicit condition in Theorem~\ref{thm:key} can be verified (see Corollaries~\ref{cor:antidiag} and \ref{cor:antidiagmodp}). We explain at the end of the paper that our result applies to abelian varieties of $\GL_2$-type.

As opposed to the ordinary case, we do not have a direct relation
between the signed Selmer groups and the Mordell-Weil group as in the
exact sequence~\eqref{seq:intro}. However, we may nonetheless obtain
information about the Selmer groups $\sel_p(A/F_n)$ from the signed
Selmer groups via the Poitou-Tate exact sequence and some calculations
in multi-linear algebra.

\subsection*{Acknowledgement} The authors would like to thank Kazim
B\"uy\"ukboduk, Daniel Delbourgo, Eyal Goren, Byoung Du Kim, Chan-Ho Kim, Jeffrey
Hatley and Florian Sprung for answering many of our questions during the
preparation of this paper. We would also like to thank the anonymous referee for their useful comments on an earlier version of the article.
Parts of this work was carried out while the second-named author was a Ph.D. student at Universit\'e Laval.

\section{Supersingular Abelian Varieties and Coleman maps}
In this section, we set some notations and review results of
\cite{BLIntegral} that we shall need.

\subsection{Cyclotomic extension and Iwasawa algebra}
\label{subsec:cyclotomic}
Fix forever an odd prime $p$ and a number field $F$ unramified at $p$.
We also fix $\overline{F}$ an algebraic closure of $F$ and let $G_F =
\gal(\overline{F}/F)$ be the absolute Galois group of $F$. For each
prime $v$ of $F$, we denote by $F_v$ the completion of $F$ at $v$.
Furthermore, we choose an algebraic closure $\overline{F_v}$ of $F_v$ as
well as an embedding $\overline{F} \hookrightarrow \overline{F_v}$ and
set $G_{F_v} = \gal(\overline{F_v}/F_v)$ to be the decomposition
subgroup of $v$ in $G_F$.  We denote by $\OO_{F}$ the ring of integers
of $F$ and $\OO_{F_v}$ the ring of integers of $F_v$ when $v$ is a
non-archimedean prime of $F$.

Let $\mu_{p^n}$ be the group of $p^n$-th roots of unity in
$\overline{F}$ for every $n
\geq 1$ and $\mu_{p^\infty} = \cup_{n \geq 1} \mu_{p^n}$. We
set $\Fcyc = \cup_{n\geq 1} \Fcycn$ to be the
$p^\infty$-cyclotomic extension of $F$ in $\overline{F}$.
We shall write $\GG_\infty =
\gal(\Fcyc/F)$ for its Galois group. The group $\GG_\infty$
is isomorphic to $\Zp^\times$ and so it may be decomposed as $\Delta
\times \Gamma$, where $\Delta$ is cyclic of order $p-1$ and $\Gamma
\simeq \Zp$. For $n \geq 0$, we write $\Gamma_n$ for the unique
subgroup of $\Gamma$ of index $p^n$.  We set $F_\infty = \Fcyc^\Delta$ 
to be the $\Zp$-cyclotomic extension of $F$, and $F_n = F_\infty^{\Gamma_n}$, 
for $n
\geq 0$.

Let $\Lambda = \Zp[[\GG_\infty]]$ be
the Iwasawa algebra of $\GG_\infty$ over $\Zp$. The aforementioned
decomposition of $\GG_\infty$ tells us that $\Lambda =
\Zp[\Delta][[\Gamma]]$. Furthermore, on fixing a topological generator
$\gamma$ of $\Gamma$, we have an isomorphism $\Zp[[\Gamma]]\simeq
\Zp[[X]]$ induced by $\gamma \mapsto X+1$.
For $n \geq 0$, we denote $\Lambda_n = \Lambda_{\Gamma_n} =
\Zp[\Delta][\Gamma/\Gamma_n]$. The previous isomorphism implies that
$\Zp[\Gamma/\Gamma_n] \simeq \Zp[[X]]/(\omega_n(X))$, where $\omega_n(X)
= (X+1)^{p^n} - 1$. For a character $\eta$ on $\Delta$ and a
$\Lambda$-module $M$, let $M^\eta$ be the $\eta$-isotypic component of
$M$, which is given by $e_\eta \cdot M$, where $e_\eta =
\frac{1}{|\Delta|} \sum_{\delta\in \Delta} \eta^{-1}(\delta)\delta$.
Note that $M^\eta$ is naturally a $\Zp[[\Gamma]]$-module.
We will say that a $\Lambda$-module $M$ has rank $r$ if $M^\eta$ has
rank $r$ over $\Zp[[\Gamma]]$ for all character $\eta$ on $\Delta$.

\subsection{Supersingular abelian varieties}
From now on, we fix a $g$-dimensional abelian variety $A$ defined over
$F$ with good supersingular reduction at every prime $v$ of $F$ dividing
$p$, which means that $A$ has good reduction at $v$ and the slope of the Frobenius acting on the Dieudonné module associated to $A$ at $v$ is constant and equal to $-1/2$ (see \S \ref{sec:coleman}).
For all $n \geq 1$, we write $A[p^n]$ for the group of
$p^n$-torsion points in $A(\overline{F})$ and $A[p^\infty] = \cup_n
A[p^n]$.
Let $T = \varprojlim_{\times p} A[p^n]$ be the $p$-adic Tate module of
$A$, which is a free $\Zp$-module of rank $2g$ endowed with a continuous
action of $G_F$ and let $V = T\otimes_{\Zp}\Qp$. For each prime $v$ of
$F$ dividing $p$, $V$ is a crystalline $G_{F_v}$-representation with
Hodge-Tate weights $0$ and $1$, both with multiplicity $g$.
Finally, we denote by $A^\vee$ the dual abelian variety of $A$.

\begin{lemm} \label{lemm:torsion}
	For any prime $v$ of $F$ dividing $p$ and $w$ a prime of $\Fcyc$
	above $v$, the torsion part of $A(\Fcyc_w)$ is finite and the
	group $A(F_{\infty,w})$ has no $p$-torsion (where we denote
	again by $w$ the prime of $F_\infty$ below $w$).
\end{lemm}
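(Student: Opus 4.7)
The two statements can be proved separately. For the finiteness of the torsion of $A(\Fcyc_w)$, I would split it into its prime-to-$p$ and its $p$-primary parts. Because $F_v/\Qp$ is unramified while $\Qp(\mu_{p^\infty})/\Qp$ is totally ramified, the extension $\Fcyc_w/F_v$ is totally ramified, so the residue field of $\Fcyc_w$ coincides with the finite residue field $\kappa_v$ of $F_v$. The reduction map is injective on prime-to-$p$ torsion, its kernel being the pro-$p$ formal group, so this torsion embeds into the finite group $\tilde A(\kappa_v)$. For the $p$-primary part, I would invoke Imai's theorem: for an abelian variety over a finite extension of $\Qp$ with good reduction, the $p$-power torsion over the maximal unramified extension adjoined with all $p$-power roots of unity is finite.

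For the second assertion, I would argue by contradiction. Suppose $A(F_{\infty,w})[p] \ne 0$; then $A(F_{n,w})[p] \ne 0$ for some $n$. Since $F_{\infty,w}/F_v$ is a totally ramified $\Zp$-extension, $\gal(F_{n,w}/F_v) \cong \Z/p^n\Z$ is a $p$-group acting on the non-zero $\Fp$-vector space $A(F_{n,w})[p]$, so admits a non-zero fixed vector, forcing $A(F_v)[p] \ne 0$. I would then derive a contradiction by showing $A(F_v)[p] = 0$: supersingular reduction gives $\tilde A(\overline{\kappa_v})[p] = 0$ (the $p$-rank is zero), so $A(F_v)[p] \subseteq \hat A(\mathfrak m_{F_v})[p]$, and the latter vanishes because $F_v/\Qp$ is absolutely unramified while $\hat A$ has height $2g \geq 2$. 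More precisely, a Newton polygon analysis of the multiplication-by-$p$ series shows that any non-trivial $p$-torsion point of $\hat A(\overline{F_v})$ has positive valuation strictly less than $1$, hence does not lie in $\mathfrak m_{F_v}$.

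The most delicate step I anticipate is justifying $\hat A(\mathfrak m_{F_v})[p] = 0$ in higher dimension $g > 1$. In dimension one, the bound $1/(p^h - 1)$ on the valuation of $p$-torsion in a height-$h$ formal group is classical. For general $g$, one can either reduce via an unramified isogeny to a product of height-$2$ Lubin--Tate formal groups, or argue scheme-theoretically: a non-zero $F_v$-point of $A[p]$ would yield, via scheme-theoretic closure and Raynaud's uniqueness of finite flat prolongations over absolutely unramified bases for $p$ odd, an étale constant subgroup scheme $\underline{\Z/p\Z} \hookrightarrow A[p]_{\OO_{F_v}}$, contradicting the connectedness of the special fiber of $A[p]$ enforced by supersingular reduction.
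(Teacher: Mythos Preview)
Your proof is correct and shares the same skeleton as the paper's: for the first assertion both of you appeal to Imai, and for the second both reduce to showing $A(F_v)[p]=0$ and then use that $\gal(F_{\infty,w}/F_v)\simeq\Zp$ is pro-$p$.

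The one genuine difference is how $A(F_v)[p]=0$ is established. The paper simply cites \cite[Lemma~5.11]{Mazur72}, which says (for good reduction over an absolutely unramified base) that reduction induces an isomorphism on $p$-torsion; supersingularity then kills the target. You instead reprove the relevant content of Mazur's lemma: supersingularity forces $A(F_v)[p]$ into the formal group, and your Raynaud argument (a non-zero $F_v$-point of $A[p]$ would prolong, since $e=1<p-1$, to an \'etale $\underline{\Z/p\Z}$ inside the connected group scheme $A[p]_{\OO_{F_v}}$) shows $\hat A(\mathfrak m_{F_v})[p]=0$. Your route is more self-contained and makes the role of unramifiedness explicit; the paper's citation is more economical.

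One small correction: Imai's theorem is about $K(\mu_{p^\infty})$ for $K/\Qp$ finite, not about $K^{\mathrm{ur}}(\mu_{p^\infty})$ as you wrote (over the latter an ordinary abelian variety already has infinite $p$-power torsion). This does not affect your application, since $\Fcyc_w=F_v(\mu_{p^\infty})$ is precisely of the form Imai covers.
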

\begin{proof}
	The first statement is a theorem of Imai~\cite{Imai}.
	
	Let $k_{F_v}$ be the residual field of $F_v$. By
	\cite[Lemma~5.11]{Mazur72}, the reduction map induces an
	isomorphism of the $p$-torsion points in
	$A(F_v)$ with the $p$-torsion points in $A(k_{F_v})$. 
	Since $A$ is supersingular at $v$, this latter group is
	trivial. Therefore, $A(F_v)$ has no $p$-torsion. 
	Furthermore, since $\Gamma_{v} = \gal(F_{\infty,w}/F_v) \simeq
	\Zp$ is a pro-$p$ group,
	$A(F_{\infty,w})$ has no $p$-torsion.
\end{proof}

\subsection{Iwasawa cohomology}
Let $v$ be a non-archimedean prime of $F$ and $w$ a prime of $F_\infty$
dividing $v$. We set $v_n$ to be the prime of $F_n$ below $w$. For $i 
\geq 0$, the projective limit of the Galois cohomology
groups $\coh^i(F_{n,v_n},T)$ relative to the corestriction maps is 
denoted by $\coh_{\mathrm{Iw}}^i(F_v,T)$.  The structure of these 
$\Zp[[\Gamma]]$-modules is well-known (see \cite[A.2]{PRLivre}).
\begin{prop} \label{prop:structureIw}
	\begin{enumerate}
	\item The groups $\coh_{\mathrm{Iw}}^i(F_v,T)$ are finitely 
		generated $\Zp[[\Gamma]]$-module, trivial if 
			$i\neq\{1,2\}$.
	\item $\coh_{\mathrm{Iw}}^2(F_v,T)$ is a torsion 
		$\Zp[[\Gamma]]$-module.
	\item The rank of $\coh_{\mathrm{Iw}}^1(F_v,T)$ is given by
\[
	\rank_{\Zp[[\Gamma]]} \coh_{\mathrm{Iw}}^1(F_v,T) = \left\{ 
			\begin{array}{lr} 0 & \text{if } v\nmid p, \\
		2g[F_v:\Qp] & \text{if } v \mid p.
	\end{array} \right.
\]
	\item The torsion sub-$\Zp[[\Gamma]]$-module of 
		$\coh_{\mathrm{Iw}}^1(F_v,T)$ is
			isomorphic to $T^{G_{F_\infty}}$.
	\end{enumerate}
\end{prop}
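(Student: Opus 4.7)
The four assertions are standard structural facts about local Iwasawa cohomology; my plan is to combine Shapiro's lemma, local Tate duality, and the local Euler--Poincaré characteristic formula applied at each finite layer, and then pass to the inverse limit. The starting observation is the Shapiro-type identification
\[
\coh^i_{\mathrm{Iw}}(F_v, T) \simeq \coh^i\bigl(G_{F_v},\, T \otimes_{\Zp} \Zp[[\Gamma]]^\iota\bigr),
\]
where $\Zp[[\Gamma]]^\iota$ carries the $G_{F_v}$-action coming from the projection $G_{F_v} \twoheadrightarrow \Gamma$ composed with inversion. For part (1), the vanishing for $i \geq 3$ is immediate from the fact that $G_{F_v}$ has $p$-cohomological dimension $\leq 2$ at any non-archimedean $v$, while finite generation in degrees $1$ and $2$ follows from Nakayama's lemma applied to the Noetherian compact ring $\Zp[[\Gamma]]$, reducing the question to the finiteness of the cohomology of the finite module $T/pT$. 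The case $i = 0$ is handled by verifying that $\varprojlim_n T^{G_{F_{n,v_n}}}$ vanishes: each term is a finitely generated torsion-free $\Zp$-module, and the transition corestriction maps act as multiplication by the successive local degrees, so the limit vanishes once the degrees become $p$ (which happens for $v \mid p$, and for $v \nmid p$ the vanishing follows alternatively from $T^{G_{F_v}} = 0$, a consequence of the Weil bounds on Frobenius eigenvalues that exclude $1$).

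For part (2), local Tate duality at each finite layer gives $\coh^2(F_{n,v_n}, T) \simeq \coh^0(F_{n,v_n}, V^*(1)/T^*(1))^\vee$, and passing to the inverse limit identifies $\coh^2_{\mathrm{Iw}}(F_v, T)$ with the Pontryagin dual of the $p^\infty$-torsion of $A^\vee(F_{\infty,w})$, which is finite (hence $\Zp[[\Gamma]]$-torsion) by Lemma~\ref{lemm:torsion} when $v \mid p$ and by the Weil bounds when $v \nmid p$. For part (3), I invoke the local Euler--Poincaré formula at each finite level: the alternating sum of $\Zp$-ranks of $\coh^i(F_{n,v_n}, T)$ equals $-2g[F_{n,v_n}:\Qp]$ if $v \mid p$ and $0$ otherwise. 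Since $\coh^0$ and $\coh^2$ are $\Zp$-torsion at each level (using part~(2) at the finite level for $\coh^2$), this pins down the $\Zp$-rank of $\coh^1(F_{n,v_n}, T)$. Combining this with the Hochschild--Serre descent identity relating $\coh^1_{\mathrm{Iw}}(F_v, T)_{\Gamma_n}$ and $\coh^1(F_{n,v_n}, T)$ up to finite error, together with the asymptotic formula $\mathrm{rank}_{\Zp} M_{\Gamma_n} = r p^n + O(1)$ valid for any finitely generated $\Zp[[\Gamma]]$-module of rank $r$, extracts the stated rank formula.

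For part (4), my strategy is to identify the $\Zp[[\Gamma]]$-torsion submodule via the short exact sequence of $G_{F_v}$-modules
\[
0 \to T \otimes_{\Zp} \Zp[[\Gamma]]^\iota \xrightarrow{\gamma - 1} T \otimes_{\Zp} \Zp[[\Gamma]]^\iota \to T \to 0.
\]
The associated long exact sequence in Galois cohomology, combined with the Shapiro identification, yields a connecting homomorphism $T^{G_{F_\infty}} \to \coh^1_{\mathrm{Iw}}(F_v, T)$ whose image is precisely the $(\gamma - 1)$-torsion. The hard part will be showing that this accounts for \emph{all} the $\Zp[[\Gamma]]$-torsion, i.e.\ that the cokernel is torsion-free: this pseudo-null vanishing statement is the principal obstacle and I expect to resolve it by repeating the duality-based torsion analysis from part (2), now applied to $T^*(1)$ so that any hypothetical unaccounted torsion would contradict the vanishing of $\coh^0_{\mathrm{Iw}}$ for the dual representation established in part (1).
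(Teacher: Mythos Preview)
The paper does not prove this proposition at all: it simply records that ``the structure of these $\Zp[[\Gamma]]$-modules is well-known (see \cite[A.2]{PRLivre})'' and moves on. So there is no argument in the paper to compare against, and your plan is an attempt to reconstruct Perrin-Riou's proof. Your outline for parts (1)--(3) is the standard one and is fine; the minor imprecision in the $i=0$ case (the corestriction maps are norm maps, hence act as multiplication by the local degree only after the invariants have stabilized so that the residual $\Gamma$-action is trivial) is easily repaired.

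Part (4), however, has a genuine gap. From the short exact sequence $0 \to T\otimes\Lambda^\iota \xrightarrow{\gamma-1} T\otimes\Lambda^\iota \to T \to 0$ of $G_{F_v}$-modules, the connecting homomorphism has source $H^0(G_{F_v},T)=T^{G_{F_v}}$, \emph{not} $T^{G_{F_{\infty,w}}}$; its image is exactly the $(\gamma-1)$-torsion of $H^1_{\mathrm{Iw}}$, which recovers only $(T^{G_{F_{\infty,w}}})^{\Gamma}$. Varying the exponent and using $\gamma^{p^n}-1$ gives $\bigcup_n H^1_{\mathrm{Iw}}[\omega_n]\cong T^{G_{F_{\infty,w}}}$, but this is still not enough: a torsion $\Lambda$-module with no finite submodule can perfectly well have characteristic ideal prime to every $\omega_n$ (for instance $\Lambda/(\gamma-1-p)$), in which case $\bigcup_n M[\omega_n]=0$ while $M\ne 0$. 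So ``no pseudo-null submodules'' together with the connecting-map computation does not force the $(\gamma-1)$-torsion to exhaust the $\Lambda$-torsion, and your proposed duality argument for this step is too vague to close the gap. Perrin-Riou's actual argument proceeds instead through the inflation--restriction sequence for $G_{F_{\infty,w}}\trianglelefteq G_{F_v}$ applied to $T\otimes\Lambda^\iota$ (equivalently, through her Iwasawa-theoretic local duality spectral sequence), which produces directly an injection of $\Lambda$-modules $T^{G_{F_{\infty,w}}}\hookrightarrow H^1_{\mathrm{Iw}}(F_v,T)$ with $\Lambda$-torsion-free cokernel; I would recommend reworking part (4) along those lines.
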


\subsection{Coleman maps and logarithmic matrices}\label{sec:coleman}
Let $v$ be a prime of $F$ dividing $p$. We shall write $f_v=[F_v:\Qp]$. As a $G_{F_v}$-representation,
$T$ admits a Dieudonn\'e module $\Dcrisv(T)$ \cite{BergerLimites}, which
is a free $\OO_{F_v}$-module of rank $2g$ equipped with a Frobenius
after tensoring by $\Qp$ and a filtration of $\OO_{F_v}$-modules $(\fil^i
\Dcrisv(T))_{i \in \Z}$ such that
\[
	\fil^i \Dcrisv(T) = \left\{ \begin{array}{lr} 
		0 & \text{for } i \geq 1, \\
		\Dcrisv(T) & \text{for } i \leq -1.
	\end{array} \right.
\]
We may choose a $\Zp$-basis $\{u_1,\ldots,u_{2gf_v}\}$ of $\Dcrisv(T)$
such that $\{u_1,\ldots,u_{gf_v}\}$ is a basis for $\fil^0 \Dcrisv(T)$.  The matrix of $\vp$ with respect to this basis is of the form
\[
C_{\vp,v}=C_v\left(
\begin{array}{c|c}
I_{gf_v}&0\\ \hline
0&\frac{1}{p}I_{gf_v}
\end{array}
\right),
\]
where $I_{gf_v}$ denote the identity matrix of dimension $gf_v$ and $C_v$ is some matrix inside $\GL_{2gf_v}(\Zp)$. As in \cite[Definition~2.4]{BLIntegral}, we may define for $n\ge1$, 
\begin{equation}\label{eq:defnmatrix}
C_{v,n}=
\left(
\begin{array}{c|c}
I_{gf_v}&0\\ \hline
0&\Phi_{p^n}(1+X)I_{gf_v}
\end{array}
\right)C_v^{-1}
\quad\text{and}\quad
M_{v,n}=\left(C_{\vp,v}\right)^{n+1}C_{v,n}\cdots C_{v,1},
\end{equation}
where $\Phi_{p^n}$ denotes the $p^n$-th cyclotomic polynomial.

Let $w$ be a prime of $\Fcyc$ dividing $v$. For $i \geq 0$, we will
denote by $\coh^i_{\mathrm{Iw},\mathrm{cyc}}(F_v,T)$ the projective
limit of $\coh^i(\Fcycn_{v_n},T)$ relative to the corestriction maps. 

We set $\HH = \Qp[\Delta] \otimes_{\Qp} \HH(\Gamma)$ where $\HH(\Gamma)$
is the set of elements $f(\gamma -1)$ with $\gamma \in \Gamma$ and $f(X)
\in \Qp[[X]]$ is convergent on the $p$-adic open unit disk.
Perrin-Riou's big logarithm map is a $\Lambda$-homomorphism \cite{PR94}
\[
	\LL_{T,v} : \coh^1_{\mathrm{Iw},\mathrm{cyc}}(F_v,T) \rightarrow 
	\HH \otimes \Dcrisv(T),
\]
which interpolates Kato's dual exponential maps \cite[II \S 
1.2]{Kato91}
\[
	\exp^*_{v,n} : \coh^1(\Fvcycn,T) \rightarrow \Fvcycn
	\otimes \fil^0\Dcrisv(T).
\]
In \cite[Theorem 1.1]{BLIntegral}, the big logarithm map is decomposed 
into
\[
	\LL_{T,v} = (u_1,\ldots,u_{2gf_v})\cdot M_v \cdot 
	\begin{pmatrix}
		\col_{T,v,1}\\
		\vdots\\
		\col_{T,v,2gf_v}
	\end{pmatrix},
\]
where $M_v$ a $2gf_v\times 2gf_v$ logarithmic matrix
defined over $\HH$ given by $\displaystyle\lim_{n\rightarrow\infty}M_{v,n}$ and $\col_{T,v,i}$, $i \in \{1,\ldots,2gf_v\}$ are
$\Lambda$-homomorphisms from $\coh^1_{\mathrm{Iw},\mathrm{cyc}}(F_v,T)$
to $\Lambda$.

If $I_v$ is a subset of $\{1,\ldots,2gf_v\}$, we set
\begin{align*}
	\col_{T,I_v} : \coh^1_{\mathrm{Iw},\mathrm{cyc}}(F_v,T) 
	&\rightarrow \prod_{k=1}^{|I_v|}
	\Lambda,\\
	\mathbf{z} & \mapsto (\col_{T,v,i}(\mathbf{z}))_{i\in
	I_v}.
\end{align*}

Let $\underline{I} = (I_v)_{v \mid p}$ be a tuple of sets indexed by the
primes of $F$ dividing $p$ with $I_v\subset\{1,\ldots, 2gf_v\}$.  We set $\II$ to be the set of all such tuples such that $\sum_{v|p}|I_v|=g[F:\Q]$. We write $\underline{I}_0=(I_{v,0})$ where $I_{v,0}=\{1,\ldots gf_v\}$. Given any $\underline{I}\in \II$ and $\mathbf{z}=z_1\wedge\cdots \wedge z_{g[F:\Q]}\in\bigwedge^{g[F:\Q]} \prod_{v|p}\coh^1_{\mathrm{Iw},\mathrm{cyc}}(F_v,T)$, we define 
\[
\col_{T,\uI}(\mathbf{z})=\det(\col_{T,v,i}(z_j))_{i\in I_v,1\le j\le g[F:\Q]}.
\]

For all $n\ge1$, let $H_{v,n}=C_{v,n}\cdots C_{v,1}$, where the matrices $C_{v,i}$ are defined as in \eqref{eq:defnmatrix}. 
Let $H_n$ be the block diagonal matrix where the blocks on the diagonal are given by $H_{v,n}$.
Given a pair $\underline{I}=(I_v),\underline{J}=(J_v)\in\II$, we define $H_{\uI,\uJ,n}$ to be the $(\uI,\uJ)$-minor of $H_{v,n}$.

\begin{prop}\label{prop:vanish2}
	Let $\mathbf{z}\in\bigwedge^{g[F:\Q]} \prod_{v|p}\coh^1_{\mathrm{Iw},\mathrm{cyc}}(F_v,T)$ and $\theta$ a character on $\GG_\infty$ of conductor $p^{n+1}$. The natural image of $\mathbf{z}$ in $e_\theta\cdot \bigwedge^{g[F:\Q]}\prod_{v|p}\frac{\coh^1(F_v(\mu_{p^{n+1}}),V)}{A(F_v(\mu_{p^{n+1}}))\otimes\Qp}$ is zero if and only if
\[
\sum_{\uJ\in \II}\left(H_{\uI_{0},\uJ,n}\col_{T,\uJ}(\mathbf{z})\right)(\theta)=0.
\]
\end{prop}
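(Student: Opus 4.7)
The plan is to translate the vanishing of the image of $\mathbf{z}$ modulo the local points into a determinantal identity via Perrin-Riou's interpolation formula, and then identify the resulting expression through the Cauchy-Binet formula.

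By multilinearity of both sides, it suffices to treat the case of a simple wedge $\mathbf{z} = z_1 \wedge \cdots \wedge z_{g[F:\Q]}$ with $z_j = (z_{j,v})_{v \mid p}$. For each $v \mid p$, the dual exponential map
\[
\exp^*_{v,n+1} : \coh^1(F_v(\mu_{p^{n+1}}),V) \longrightarrow F_v(\mu_{p^{n+1}}) \otimes \fil^0 \Dcrisv(V)
\]
has kernel exactly the image of the Kummer map $A(F_v(\mu_{p^{n+1}})) \otimes \Qp$, so it induces an injection on the quotient. After applying $e_\theta$ and taking the $g[F:\Q]$-th exterior power over $\prod_{v \mid p}$, the induced map becomes an injection between one-dimensional $\Qp(\theta)$-vector spaces (since $\sum_{v\mid p} gf_v = g[F:\Q]$). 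Hence the image of $\mathbf{z}$ vanishes in the source if and only if $\bigwedge_j \exp^*_{v,n+1}((z_j)_\theta)$ vanishes in the target.

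To compute each $\exp^*((z_{j,v})_{n+1,\theta})$, I would use the decomposition $\LL_{T,v} = (u_1,\ldots,u_{2gf_v}) M_v \col_{T,v}$. From the recursion defining $M_{v,k}$ and the identity $\Phi_{p^{k+1}}(\zeta_{p^n}) = p$ for $k \geq n$, the sequence $M_{v,k}(\theta)$ stabilizes at $k = n$, giving $M_v(\theta) = C_{\vp,v}^{n+1} H_{v,n}(\theta)$. Using $(u_1,\ldots,u_{2gf_v}) C_{\vp,v}^{n+1} = \vp^{n+1}(u_1,\ldots,u_{2gf_v})$, this yields
\[
\theta(\LL_{T,v}(z_{j,v})) = \vp^{n+1}\bigl( (u_1,\ldots,u_{2gf_v}) \cdot H_{v,n}(\theta) \cdot \col_{T,v}(z_{j,v})(\theta) \bigr).
\]
Since the leftmost factor $C_{v,n}(\theta)$ of $H_{v,n}(\theta)$ has its lower block annihilated by $\Phi_{p^n}(\zeta_{p^n}) = 0$, the bottom $gf_v$ rows of $H_{v,n}(\theta)$ vanish. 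Combined with the interpolation property of Perrin-Riou's map (which relates $\theta(\LL_{T,v})$ to $\exp^*$ via an isomorphism involving $\vp^{n+1}$ and a nonzero Gauss-sum scalar), we obtain
\[
\exp^*((z_{j,v})_{n+1,\theta}) = c_{v,\theta} \cdot (u_1,\ldots,u_{gf_v}) \cdot \bigl(H_{v,n}(\theta)\bigr)_{I_{v,0},\,\cdot} \cdot \col_{T,v}(z_{j,v})(\theta)
\]
for some nonzero $c_{v,\theta} \in \Qp(\theta)$ independent of $j$, where $(H_{v,n}(\theta))_{I_{v,0},\,\cdot}$ denotes the top $gf_v$ rows.

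Thus $\bigwedge_j \exp^*((z_j)_{n+1,\theta})$, expressed in the canonical generator of the one-dimensional target, equals up to a nonzero scalar the determinant of the product $H' \cdot C$, where $H'$ is the $g[F:\Q] \times 2g[F:\Q]$ block-diagonal matrix with $v$-th block $(H_{v,n}(\theta))_{I_{v,0},\,\cdot}$ and $C$ is the $2g[F:\Q] \times g[F:\Q]$ matrix with $((v,k),j)$-entry $\col_{T,v,k}(z_{j,v})(\theta)$. By Cauchy-Binet,
\[
\det(H'C) = \sum_{\uJ} \det\bigl(H'_{\cdot,\uJ}\bigr) \cdot \det\bigl(C_{\uJ,\cdot}\bigr),
\]
summed over column-index subsets of size $g[F:\Q]$, which are precisely the elements of $\II$. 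The block structure of $H'$ forces $\det(H'_{\cdot,\uJ}) = 0$ unless $|J_v| = gf_v$ for every $v$, in which case it equals $H_{\uI_0,\uJ,n}(\theta)$, while $\det(C_{\uJ,\cdot}) = \col_{T,\uJ}(\mathbf{z})(\theta)$ by definition. This yields the desired equivalence. The main obstacle is the careful invocation of Perrin-Riou's interpolation formula at finite-order characters, notably bookkeeping the factor $C_{\vp,v}^{n+1}$ in $M_{v,n}$ and showing that it contributes only an invertible Frobenius twist that preserves the vanishing condition, so that only minors of $H_{v,n}(\theta)$ survive in the final identity.
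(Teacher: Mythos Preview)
Your proof is correct and follows essentially the same route as the paper: both identify the kernel of $\exp^*$ with the local points, reduce to the vanishing of $\wedge_j \exp^*(z_j)$ at $\theta$, use the identity $M_v(\theta)=C_{\vp,v}^{\,n+1}H_{v,n}(\theta)$ (you via stabilization, the paper via the congruence $\LL_{T,v}\equiv (u_\bullet)M_{v,n}\col_{T,v}\pmod{\omega_n}$), observe that the bottom $gf_v$ rows of $H_{v,n}(\theta)$ vanish because $\Phi_{p^n}(\zeta_{p^n})=0$, and then expand the wedge as a sum of minors. The only cosmetic difference is that you name Cauchy--Binet explicitly where the paper simply writes ``on taking wedge products''.
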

\begin{proof}
Let us write $\mathbf{z}=z_1\wedge\cdots\wedge z_{g[F:\Q]}$, $\LL_T=\prod_{v|p}\LL_{T,v}$ and $\exp_{n+1}^*=\prod_{v|p}\exp_{v,n+1}^*$. Since the image of $A(F_v(\mu_{p^{n+1}}))\otimes\Qp$ inside $\coh^1(F_v(\mu_{p^{n+1}}),V)$ is precisely the kernel of the dual exponential map $\exp_{v,n+1}^*$, the image of $\mathbf{z}$ in $$e_\theta\cdot \bigwedge^{g[F:\Q]}\prod_{v|p}\frac{\coh^1(F_v(\mu_{p^{n+1}}),V)}{A(F_v(\mu_{p^{n+1}})_{v})\otimes\Qp}$$ is zero if and only if
\[
e_\theta\cdot \wedge_{1\le j\le g[F:\Q]} \exp_{n+1}^*(z_j)=0.
\]
 Via the interpolation formula of Perrin-Riou's big logarithm map (see for example \cite[Theorem~B.5]{LLZp2}), this is equivalent to
\begin{equation}\label{eq:vanish2}
\wedge_{1\le j\le g[F:\Q]} (1\otimes \vp^{-n-1})\circ\LL_{T}(z_j)(\theta)=0.
\end{equation}

For all $1\le j\le g[F:\Q]$, we may write $\LL_{T}(z_j)=\left(\LL_{T,v}(z_j)\right)_{v|p}$, where $\LL_{T,v}(z_j)$ is understood to be the composition of $\LL_{T,v}$ with the projection $\prod_{w|p} \coh^1_{\mathrm{Iw},\mathrm{cyc}}(F_w,T)\rightarrow \coh^1_{\mathrm{Iw},\mathrm{cyc}}(F_v,T)$. Recall from \cite[proof of Proposition~2.5]{BLIntegral}  that
\[
\LL_{T,v}(z_j)\equiv\begin{pmatrix}
u_1&\cdots &u_{2gf_v}
\end{pmatrix}
\cdot M_{v,n}\cdot\begin{pmatrix}
\col_{T,v,1}(z_j)\\ \vdots \\ \col_{T,v,2gf_v}(z_j)
\end{pmatrix}\mod \omega_{n}\HH\otimes\Dcrisv(T).
\]
But the right-hand side is equal to
\[
\begin{pmatrix}
\vp^{n+1}(u_1)&\cdots &\vp^{n+1}(u_{2gf_v})
\end{pmatrix}
 H_{v,n}\begin{pmatrix}
\col_{T,v,1}(z_j)\\ \vdots \\ \col_{T,v,2gf_v}(z_j)
\end{pmatrix},
\]
which implies that
\[
(1\otimes \vp^{-n-1})\circ\LL_{T,v}(z_j)\equiv\begin{pmatrix}
u_1&\cdots &u_{2gf_v}
\end{pmatrix}
\cdot H_{v,n}\cdot\begin{pmatrix}
\col_{T,v,1}(z_j)\\ \vdots \\ \col_{T,v,2gf_v}(z_j)
\end{pmatrix}\mod \omega_{n}\HH\otimes\Dcrisv(T).
\]
On taking wedge products, \eqref{eq:vanish2} is thus equivalent to the vanishing of 
\[
\sum_{\uI,\uJ\in \II}\wedge_{i\in I_v}u_{i}
\left( H_{\uI,\uJ,n}\col_{T,\uJ}(\mathbf{z})\right)(\theta).
\]

It remains to show that $H_{\uI,\uJ,n}$ vanishes at $\theta$ unless $\uI=\uI_0$ and $\uJ\in\II$. Indeed, if $i\in \{gf_v+1,\ldots 2gf_v\}$, the entire $i$-th row of $C_{v,n}$ is divisible by $\Phi_{p^n}(1+X)$. Hence, the same is true for $H_{v,n}$. In particular, when we evaluate it at $\theta$, the whole row becomes zero. In other words, the lower half of $H_{v,n}(\theta)$ is entirely zero. Therefore, in order for a $g[F:\Q]\times g[F:\Q]$ minor to be non-zero, we must take the upper half of $H_{v,n}$. In other words, $\uI=\uI_0$.
\end{proof}

\section{Selmer groups}
In this section, we introduce various Selmer groups associated to
$A^\vee$, gather some of their properties that we shall need and finish
by using the Poitou-Tate exact sequence to relate them to one another.

\subsection{Signed Selmer groups} \label{subsec:signedSelmer}
Let $v$ be a prime of $F$ dividing $p$ and $w$ a prime of $\Fcyc$
above $v$ and fix $\uI=(I_v)\in\II$. We define 
\[
	\coh^1_{I_v}(\Fcyc_w,A^\vee[p^\infty]) \subset\coh^1(\Fcyc_w,A^\vee[p^\infty])
\]
to be the orthogonal complement of $\ker \col_{T,I_v}$ under Tate's
local pairing
\[
	\coh^1(\Fcyc_w,A^\vee[p^\infty]) \times
	\coh_{\mathrm{Iw},\mathrm{cyc}}^1(F_v,T) \rightarrow \Qp/\Zp.
\]
Since $A^\vee[p^\infty](\Fcyc_w)$ is a finite $p$-group by
Lemma~\ref{lemm:torsion} and the order of $\Delta$ is $p-1$, the groups
$\coh^1(\Delta,A^\vee[p^\infty](\Fcyc_w))$ and
$\coh^2(\Delta,A^\vee[p^\infty](\Fcyc_w))$ are trivial. Therefore, by
the inflation-restriction exact sequence, the restriction map 
\[
	\coh^1(F_{\infty,w},A^\vee[p^\infty]) \rightarrow
	\coh^1(\Fcyc_w,A^\vee[p^\infty])^\Delta
\]
is an isomorphism. We use this isomorphism to define
$\coh^1_{I_v}(F_{\infty,w},A^\vee[p^\infty]) \subset
\coh^1(F_{\infty,w},A^\vee[p^\infty])$ by
\[
	\coh^1_{I_v}(F_{\infty,w},A^\vee[p^\infty]) =
	\coh^1_{I_v}(\Fcyc_w,A^\vee[p^\infty])^\Delta .
\]

The $\underline{I}$-Selmer group of $A^\vee$ over $F_\infty$ is then
defined by 
\[
	\sel_{\underline{I}}(A^\vee/F_\infty) = \ker
	\left(\coh^1(F_\infty,A^\vee[p^\infty]) \rightarrow
	\prod_{w \nmid p } \coh^1(F_{\infty,w},A^\vee[p^\infty]) \times
	\prod_{w\mid p }
	\frac{\coh^1(F_{\infty,w},A^\vee[p^\infty])}{\coh^1_{I_v}(F_{\infty,w},A^\vee[p^\infty])}
	\right).
\]
\begin{rema}
	When $F = \Q$ and $A$ is an elliptic curve with $a_p = 0$, for
	an appropriate choice of basis for the Dieudonn\'e module of
	$T$, the signed Selmer groups coincide with
	Kobayashi~\cite{Kobayashi} plus and minus Selmer groups. See \cite[Appendix~4]{BLIntegral}.
\end{rema}

We denote by $\XX_{\underline{I}}(A^\vee/F_\infty)$ the Pontryagin dual of $\sel_{\underline{I}}(A^\vee/F_\infty)$. As in \cite{Kobayashi}, we have the following conjecture.
\begin{conj} \label{conj}
	For all $\underline{I} \in \II$,
	the $\Zp[[\Gamma]]$-module
	$\XX_{\underline{I}}(A^\vee/F_\infty)$
	is torsion.
\end{conj}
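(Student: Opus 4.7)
The plan is to follow the strategy pioneered by Kobayashi~\cite{Kobayashi} in the case of an elliptic curve over $\Q$ with $a_p=0$, which proceeds in two main steps. First, one constructs a family of zeta elements (or an Euler system) for $A^\vee$ over $F_\infty$ and extracts from them, through the decomposition of Perrin-Riou's big logarithm map $\LL_{T,v}$ recalled in \S\ref{sec:coleman}, signed $p$-adic $L$-functions $L_{\uI}\in\Lambda$ indexed by $\uI\in\II$. Second, one proves that each $L_{\uI}$ is nonzero. Given these two ingredients, the standard Euler system divisibility yields a relation of the form
\[
\mathrm{char}_\Lambda\XX_{\uI}(A^\vee/F_\infty)\,\bigm|\,L_{\uI}\cdot\Lambda,
\]
and the cotorsion of $\XX_{\uI}(A^\vee/F_\infty)$ on each $\eta$-isotypic component follows at once.

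For the first step, one would want to extend Kato's construction of zeta elements to $A^\vee$. This is realistic precisely in the $\GL_2$-type setting highlighted in the introduction: the zeta element then decomposes along the factors of $V$ corresponding to the newform constituents, and the $L_{\uI}$ are assembled from Kato's $p$-adic $L$-functions by means of the Coleman-map decomposition already exploited in the proof of Proposition~\ref{prop:vanish2}. For the second step, the interpolation property of $\LL_{T,v}$ at finite-order characters $\theta$ of $\GG_\infty$ reduces the non-vanishing of $L_{\uI}$ to non-vanishing of twisted complex $L$-values of $A^\vee$, and one then invokes a Rohrlich-type result in the spirit of the discussion recalled in the introduction to exclude the vanishing of $L_{\uI}$ along the entire cyclotomic tower.

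The principal obstacle is the absence of any known Euler system for a general supersingular abelian variety: outside of the $\GL_2$-type case, one has neither zeta elements nor explicit reciprocity laws, so neither of the two steps above can presently be carried out. A purely algebraic alternative would be to compute $\rank_\Lambda\XX_{\uI}(A^\vee/F_\infty)$ directly from the Poitou-Tate exact sequence, using Proposition~\ref{prop:structureIw} to pin down the local ranks, thereby reducing the conjecture to the assertion that the image of global Iwasawa cohomology has maximal possible $\Lambda$-rank $g[F:\Q]$ inside $\prod_{v\mid p}\coh^1_{\mathrm{Iw},\mathrm{cyc}}(F_v,T)/\ker\col_{T,I_v}$. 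However, proving this maximality amounts essentially to exhibiting non-trivial global classes of exactly the type Euler systems are designed to produce, so this route does not obviously sidestep the difficulty. It is for these reasons that the authors leave the statement as a conjecture and assume it in the remainder of the paper.
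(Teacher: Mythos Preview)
Your assessment is correct: the paper does not prove this statement at all. It is explicitly stated as a conjecture, and immediately after it the authors only remark that the elliptic curve case over $\Q$ is known by Kobayashi and Sprung, and that an analogue for modular forms is proved in \cite{LLZWach}. The conjecture is then taken as a standing hypothesis (e.g.\ in Lemma~\ref{lemm:PoitouTate} and Theorem~\ref{thm:key}). Your outline of the Euler-system strategy, your identification of the $\GL_2$-type case as the one where it is currently feasible, and your explanation of why the general case remains open are all accurate and match both the paper's remarks and the surrounding literature. There is nothing further to compare.
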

When $A$ is an elliptic curve defined over $\Q$, Conjecture~\ref{conj} is known to be true (c.f.
\cite{Kobayashi,Sprung}). See also \cite{LLZWach} where a similar conjecture has been proved for modular forms.
\subsection{$p$-Selmer groups} \label{subsec:pSel}
The $p$-Selmer group of $A^\vee$ over an algebraic extension $K$ of $F$
is defined by 
\[
	\sel_p(A^\vee/K) = \ker \left(\coh^1(K,A^\vee[p^\infty])
	\rightarrow \prod_{v}
	\frac{\coh^1(K_v,A^\vee[p^\infty])}{A^\vee(K_v)\otimes
	\Qp/\Zp}\right),
\]
where the injection $A^\vee(K_v) \otimes \Qp/\Zp \hookrightarrow
\coh^1(K_v,A^\vee[p^\infty])$ is the Kummer map. 
Note that $A^\vee(K_v) \otimes \Qp/\Zp = 0$ when $v$ does not divide
$p$. Furthermore, the orthognal complement of $A^\vee(K_v) \otimes
\Qp/\Zp$ under Tate's local pairing 
\[
	\coh^1(K_v,A^\vee[p^\infty]) \times \coh^1(K_v,T) \rightarrow
	 \Qp/\Zp
\]
is $A(K_v) \otimes \Zp$.
The $p$-Selmer group then fits into a short exact sequence
\begin{equation} \label{eq:sel}
	0 \rightarrow A^\vee(K) \otimes \Qp/\Zp \rightarrow
	\sel_p(A^\vee/K) \rightarrow \Sha(A^\vee/K)[p^\infty]
	\rightarrow 0,
\end{equation}
where $\Sha(A^\vee/K)$ is the Tate-Shafarevich group of $A^\vee$ over
$K$. 
We denote by $\XX_p(A^\vee/K)$ the Pontryagin dual
of $\sel_{\underline{I}}(A^\vee/K)$.

\subsection{Fine Selmer groups}
The fine Selmer group of $A^\vee$ over an algebraic extension $K$ of $F$
is defined by 
\[
	\sel_0(A^\vee/K) = \ker \left(\coh^1(K,A^\vee[p^\infty]) \rightarrow
	\prod_v \coh^1(K_v,A^\vee[p^\infty])\right).
\]
We denote by $\XX_0(A^\vee/K)$ its Pontryagin dual.

One has a
``control theorem'' for the fine Selmer groups in the cyclotomic
extension, which we prove following closely
Greenberg~\cite[\S 3]{GreenbergIwasawaEll} and
\cite{GreenbergGaloisSelmer}.
\begin{lemm} \label{lemm:control}
	The kernel and cokernel of the restriction map 
	\[
		\sel_0(A^\vee/F_n) \rightarrow
		\sel_0(A^\vee/F_\infty)^{\Gamma_n}
	\]
are finite and have bounded orders as $n$ varies.
\end{lemm}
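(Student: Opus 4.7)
The plan is to follow the standard Mazur--Greenberg control-theorem strategy. Apply the snake lemma to the commutative diagram
\[
\begin{tikzcd}[column sep=small]
0 \arrow[r] & \sel_0(A^\vee/F_n) \arrow[r] \arrow[d] & \coh^1(F_n, A^\vee[p^\infty]) \arrow[r] \arrow[d, "\alpha_n"] & \displaystyle\prod_{v_n} \coh^1(F_{n,v_n}, A^\vee[p^\infty]) \arrow[d, "\gamma_n"] \\
0 \arrow[r] & \sel_0(A^\vee/F_\infty)^{\Gamma_n} \arrow[r] & \coh^1(F_\infty, A^\vee[p^\infty])^{\Gamma_n} \arrow[r] & \displaystyle\prod_{w} \coh^1(F_{\infty,w}, A^\vee[p^\infty])
\end{tikzcd}
\]
whose vertical arrows are restriction maps; this reduces the problem to showing that $\alpha_n$ is an isomorphism and that $\ker \gamma_n$ is finite of order bounded independently of $n$.

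For the global comparison, inflation--restriction yields $\ker \alpha_n = \coh^1(\Gamma_n, A^\vee[p^\infty](F_\infty))$ and an embedding $\coker \alpha_n \hookrightarrow \coh^2(\Gamma_n, A^\vee[p^\infty](F_\infty))$. Since $A^\vee$ is isogenous to $A$ via any polarisation, it also has good supersingular reduction at every prime of $F$ above $p$; applying Lemma~\ref{lemm:torsion} to $A^\vee$ gives $A^\vee[p^\infty](F_{\infty, w}) = 0$ for every $w \mid p$. The injection $A^\vee[p^\infty](F_\infty) \hookrightarrow A^\vee[p^\infty](F_{\infty, w})$ then forces $A^\vee[p^\infty](F_\infty) = 0$, so $\alpha_n$ is an isomorphism.

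For the local comparison, inflation--restriction at each prime gives
\[
\ker \gamma_n = \prod_{v_n} \coh^1(\gal(F_{\infty, w}/F_{n, v_n}), A^\vee[p^\infty](F_{\infty, w})).
\]
At primes $v \mid p$, each factor vanishes, again by Lemma~\ref{lemm:torsion}. At primes $v \nmid p$, the extension $F_{\infty, w}/F_v$ is unramified (since $F_\infty/F$ ramifies only above $p$), and $\gal(F_{\infty, w}/F_{n, v_n})$ is a closed (pro-cyclic) subgroup of $\Zp$. One shows that $A^\vee[p^\infty](F_{\infty, w})$ is finite by passing to inertia-invariants and invoking Weil's bounds, which guarantee that Frobenius at $v$ has no eigenvalue equal to $1$ on $V_p A^\vee$; the cohomology of a pro-cyclic group with finite module is then finite with order at most $|A^\vee[p^\infty](F_{\infty, w})|$, a quantity independent of $n$.

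The hard part will be to ensure that only finitely many primes contribute to the product defining $\ker \gamma_n$. Following Greenberg, one replaces the global cohomology by $\coh^1(G_{F, S}, A^\vee[p^\infty])$ for a finite set $S$ containing the primes above $p$, the primes of bad reduction of $A^\vee$, and the archimedean primes; the localisation then only involves primes in $S$, reducing the product to finitely many terms. Combined with the per-place bound from the previous step, this yields the desired uniform finiteness.
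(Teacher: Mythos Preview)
Your overall architecture matches the paper's: the same commutative diagram, the same use of inflation--restriction to show $\alpha_n$ is an isomorphism via Lemma~\ref{lemm:torsion}, and the snake lemma to conclude. The reduction to a finite set $S$ of primes is also what the paper does (it works with $\coh^1(F_\Sigma/F_n,-)$ throughout).

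The gap is in your local analysis at $v\nmid p$. The assertion that $A^\vee[p^\infty](F_{\infty,w})$ is finite is \emph{false} in general, and the Weil bound does not prove it. The Weil bound tells you that Frobenius $\phi_v$ has no eigenvalue equal to $1$ on $(V_pA^\vee)^{I_v}$ at primes of good reduction; this gives finiteness of $A^\vee[p^\infty](F_v)$, not of $A^\vee[p^\infty](F_{\infty,w})$. The field $F_{\infty,w}$ is the unramified $\Zp$-extension of $F_v$, so the relevant invariants are under $\prod_{\ell\neq p}\Z_\ell\subset\hat{\Z}$, not under all of $\hat{\Z}$. Concretely, if some eigenvalue of $\phi_v$ on $T_pA^\vee$ is $\equiv 1\pmod p$, then $\ker(\phi_v^{p^n}-1)$ on $(\Qp/\Zp)^{2g}$ grows with $n$ and the union $A^\vee[p^\infty](F_{\infty,w})$ is infinite. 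At primes of bad reduction the situation is worse: for split multiplicative reduction, Frobenius \emph{does} have eigenvalue $1$ on $(V_pA^\vee)^{I_v}$, so your appeal to Weil fails outright.

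The paper's fix is to allow $M=A^\vee(F_{\infty,w})[p^\infty]\cong(\Qp/\Zp)^t\times(\text{finite})$ with $t$ possibly positive, and to observe that since $M^{\Gamma_{v_n}}=A^\vee(F_{n,v_n})[p^\infty]$ is finite (the Mordell--Weil group over a local field being finitely generated), the map $\gamma_{v_n}-1$ is injective, hence surjective, on $M_{\mathrm{div}}$. Thus $\coh^1(\Gamma_{v_n},M)=M/(\gamma_{v_n}-1)M$ has order bounded by $[M:M_{\mathrm{div}}]$, independent of $n$. At good reduction primes one checks $M$ is in fact divisible (the action of $\prod_{\ell\neq p}\Z_\ell$ on the $p$-divisible module $A^\vee[p^\infty]$ factors through a finite prime-to-$p$ quotient, whose invariants form a direct summand), so $\ker r_n=0$ there and only the finitely many bad primes contribute.
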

\begin{proof}
	The diagram
	\begin{equation} \label{diag:control}
			\begin{tikzcd}
				0 \arrow{r} & \sel_0(A^\vee/F_n) 
				\arrow{r}
				\arrow{d} & \coh^1(F_n,A^\vee[p^\infty])
				\arrow{r} \arrow{d} & \prod_{v_n}
				\coh^1(F_{n,v_n},A^\vee[p^\infty])
				\arrow{d}\\
				0 \arrow{r} & 
				\sel_0(A^\vee/F_\infty)^{\Gamma_n} 
				\arrow{r}
				& 
				\coh^1(F_\infty,A^\vee[p^\infty])^{\Gamma_n}
				\arrow{r} & \prod_{w}
			\coh^1(F_{\infty,w},A^\vee[p^\infty])^{\Gamma_n}
		\end{tikzcd}
	\end{equation}
	is commutative.

	One has the inflation-restriction exact sequence
	\[
		0 \rightarrow
		\coh^1(\Gamma_n,A^\vee(F_\infty)[p^\infty]) \rightarrow
		\coh^1(F_n,A^\vee[p^\infty]) \rightarrow
		\coh^1(F_\infty,A^\vee[p^\infty])^{\Gamma_n} \rightarrow
		\coh^2(\Gamma_n,A^\vee(F_\infty)[p^\infty]).
	\]
	By Lemma~\ref{lemm:torsion}, the groups
	$\coh^1(\Gamma_n,A^\vee(F_\infty)[p^\infty])$ and
	$\coh^2(\Gamma_n,A^\vee(F_\infty)[p^\infty])$ are trivial. Thus,
	the central vertical map of the diagram is an isomorphism.

	We now study the rightmost vertical map prime by prime. Let $v$ be any
	prime of $F$ and $v_n$ be any prime of $F_n$ above $v$. Let $r_n$ be the
	restriction map 
	\[
		\coh^1(F_{n,v_n},A^\vee[p^\infty]) \rightarrow
		\coh^1(F_{\infty,w},A^\vee[p^\infty]),
	\]
	where $w$ is any prime of $F_\infty$ dividing $v_n$.
	If $v$ is archimedean, then
	$v$ splits completely in $F_\infty/F$. Thus, $\ker(r_n) =0$.
	If $v$ is a non-archimedean prime, by the inflation-restriction exact
	sequence 
	\[
		\ker (r_n) \simeq
		\coh^1(\Gamma_{v_n},A^\vee(F_{\infty,w})[p^\infty]).
	\]
	By Lemma~\ref{lemm:torsion}, if $v$ divides $p$, this last
	group is trival. We assume that $v$ does not divide $p$.
	Then $v$ is unramified and finitely decomposed in $F_\infty$.
	Thus, $F_{\infty,w}/F_v$ is an unramified $\Zp$-extension. Let
	$\gamma_{v_n}$ be a topological generator of $\Gamma_{v_n}$.
	Then
	\[
		\ker (r_n) \simeq
		\coh^1(\Gamma_{v_n},A^\vee(F_{\infty,w})[p^\infty])
		\simeq A^\vee(F_{\infty,w})[p^\infty]/(\gamma_{v_n} -
		1)A^\vee(F_{\infty,w})[p^\infty].
	\]
	As a group $A^\vee(F_{\infty,w})[p^\infty] \simeq (\Qp/\Zp)^t \times
	(\text{a finite group})$, for some $0\leq t \leq 2g$.
	Since $A^\vee(F_{n,v_n})$ is finitely generated, the kernel of
	$(\gamma_{v_n} -1)$ acting on $A^\vee(F_{\infty,w})[p^\infty]$
	is finite. Thus, the restriction of $(\gamma_{v_n} - 1)$ on 
	the maximal divisible subgroup which we write
	$(A^\vee(F_{\infty,w})[p^\infty])_\mathrm{div}$ is surjective
	and we have
	\[
		(A^\vee(F_{\infty,w})[p^\infty])_\mathrm{div} \subset
		(\gamma_{v_n} -1) A^\vee(F_{\infty,w})[p^\infty].
	\]
	Therefore, the cardinality of $\ker (r_n)$ is bounded by
	$[A^\vee(F_{\infty,w})[p^\infty]:(A^\vee(F_{\infty,w})[p^\infty])_\mathrm{div}]$
	which is independent of $n$.
	Furthermore, if $A^\vee$ has good reduction at $v$, then the
	inertia subgroup of $G_{F_v}$ acts trivially and
	$A^\vee(F_{\infty,w})[p^\infty]$ is divisible.
	Hence,
	$\ker(r_n)$ is trivial. 

	Now the set of non-archimedean primes of $F$ where $A^\vee$
	has bad reduction is finite and for each of these primes,
	the order of $\ker(r_n)$ is bounded as $n$ varies and the number
	of primes $v_n$ of $F_n$ dividing $v$ is also bounded, hence,
	the order of the kernel of the right-most vertical map in the
	diagram is bounded as $n$ varies.

	We conclude by applying the snake lemma to the 
	diagram~\eqref{diag:control}.
\end{proof}

\subsection{Poitou-Tate exact sequences}
Let $\Sigma$ be a finite set of primes of $F$ containing the primes
dividing $p$, the archimedean primes and the primes of bad reduction of
$A^\vee$. If $K$ is an extension of $F$, we say by abuse that a prime of
$K$ is in $\Sigma$ if it divides an element of $\Sigma$ and we denote by
$K_\Sigma$ the maximal extension of $K$ unramified outside $\Sigma$.
The cyclotomic extension $\Fcyc$ is contained in $F_\Sigma$ since only
archimedean primes and primes dividing $p$ can ramify in $\Fcyc$.
Furthermore, the action of
$G_F$ on $A^\vee[p^\infty]$ factorizes through $\gal(F_\Sigma/F)$.  In
particular, for $F^\prime$ any extension of $F$ contained in $F_\infty$
and $\ast \in \{p,0,\underline{I}\}$, we have that
$\sel_{\ast}(A^\vee/F^\prime) \subset
\coh^1(F_\Sigma/F^\prime,A^\vee[p^\infty])$. 
Therefore, all the Pontryagin duals $\XX_{\ast}(A^\vee[p^\infty]/F_\infty)$
are finitely generated $\Zp[[\Gamma]]$-modules (see
\cite{GreenbergIwasawaRep}).

For $i \geq 0$, let $\coh^i_{\mathrm{Iw},\Sigma}(F,T)$ be the
projective limit of the groups $\coh^i(F_\Sigma/F_n,T)$ relative to the corestriction maps.
By \cite[Proposition A.3.2]{PRLivre}, we have the exact sequences
\begin{align}
	0 \rightarrow \XX_0(A^\vee/F_\infty) \rightarrow
	\coh^2_{\mathrm{Iw},\Sigma}(F,T) \rightarrow \prod_{w \in
	\Sigma}\coh^2_{\mathrm{Iw}}(F_v,T), \label{diag:finePT} \\ 
	\coh^1_{\mathrm{Iw},\Sigma}(F,T) \rightarrow \prod_{w\in \Sigma}
	\frac{\coh^1_{\mathrm{Iw}}(F_v,T)}{\ker \col_{I_v}} \rightarrow
	\XX_{\underline{I}}(A^\vee/F_\infty) \rightarrow
	\XX_0(A^\vee/F_\infty)
	\rightarrow 0, \label{diag:signedPT} 
\end{align}
and, for any $n \geq 0$,
\begin{align}
	\coh^1(F_{n,\Sigma},T) \rightarrow \prod_{v|p}
	\frac{\coh^1(F_{n,v},T)}{A(F_{n,v})\otimes \Zp} \rightarrow
	\XX_p(A^\vee/F_n) \rightarrow \XX_0(A^\vee/F_n) \rightarrow 0
	\label{diag:pPT}.
\end{align}

\begin{lemm} \label{lemm:PoitouTate}
	Assume that Conjecture~\ref{conj} holds. Then
	\begin{enumerate}
		\item \label{lemm:PoitouTate1} $\XX_0(A^\vee/F_\infty)$ is a torsion
			$\Zp[[\Gamma]]$-module,
		\item \label{lemm:PoitouTate2} $\coh^2_{\mathrm{Iw},\Sigma}(F,T)$ is
			a torsion $\Zp[[\Gamma]]$-module,
		\item \label{lemm:PoitouTate3} $\coh^1_{\mathrm{Iw},\Sigma}(F,T)$ is a
			$\Zp[[\Gamma]]$-module of rank $g[F:\Q]$.
	\end{enumerate}
\end{lemm}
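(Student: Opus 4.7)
The three claims can be attacked in sequence, each building on the previous.

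For part (1), the plan is to read off the torsion property directly from \eqref{diag:signedPT}: its tail yields a surjection $\XX_{\underline{I}}(A^\vee/F_\infty) \twoheadrightarrow \XX_0(A^\vee/F_\infty)$, and since Conjecture~\ref{conj} asserts that the source is $\Zp[[\Gamma]]$-torsion, so is the target.

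For part (2), I would invoke \eqref{diag:finePT}, which exhibits $\coh^2_{\mathrm{Iw},\Sigma}(F,T)$ as an extension of a submodule of $\prod_{w \in \Sigma} \coh^2_{\mathrm{Iw}}(F_v,T)$ by $\XX_0(A^\vee/F_\infty)$. The kernel is torsion by part (1), and each local factor $\coh^2_{\mathrm{Iw}}(F_v,T)$ is torsion by Proposition~\ref{prop:structureIw}. Hence the middle term is torsion.

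For part (3), the plan is to couple part (2) with the global Euler--Poincar\'e formula for Iwasawa cohomology over the cyclotomic $\Zp$-extension (see \cite{PRLivre}), which reads
\[
\rank_{\Zp[[\Gamma]]} \coh^1_{\mathrm{Iw},\Sigma}(F,T) - \rank_{\Zp[[\Gamma]]} \coh^2_{\mathrm{Iw},\Sigma}(F,T) = r_1(F) \cdot \rank_{\Zp}\bigl(T^{c=-1}\bigr) + r_2(F) \cdot \rank_{\Zp} T,
\]
where $r_1(F)$ and $r_2(F)$ count the real and complex places of $F$. For $T$ the Tate module of $A$, $\rank_{\Zp} T = 2g$ and at each real place complex conjugation has $\pm 1$-eigenspaces each of $\Zp$-rank $g$; the right-hand side is therefore $g(r_1 + 2r_2) = g[F:\Q]$. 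Combined with the vanishing $\rank_{\Zp[[\Gamma]]} \coh^2_{\mathrm{Iw},\Sigma}(F,T) = 0$ from part (2), this yields the desired rank.

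The main obstacle is the careful invocation of the Euler--Poincar\'e formula with its archimedean contributions, which is the only non-trivial global ingredient. As a cross-check, \eqref{diag:signedPT} already supplies the inequality $\rank \coh^1_{\mathrm{Iw},\Sigma}(F,T) \geq g[F:\Q]$: since $\XX_{\underline{I}}(A^\vee/F_\infty)$ is torsion, the image of $\coh^1_{\mathrm{Iw},\Sigma}(F,T)$ in $\prod_w \coh^1_{\mathrm{Iw}}(F_v,T)/\ker \col_{I_v}$ has full rank equal to $\sum_{v \mid p} |I_v| = g[F:\Q]$ (local terms at $v \nmid p$ contributing torsion, by Proposition~\ref{prop:structureIw}). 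The matching upper bound is where the global Euler characteristic input is essential.
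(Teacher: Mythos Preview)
Your proposal is correct and follows the same route as the paper. Parts~(1) and~(2) match the paper's proof verbatim, and for part~(3) the paper simply invokes \cite[Proposition~1.3.2 (i)~$\Rightarrow$~(ii)]{PRLivre}, whose content is precisely the global Euler--Poincar\'e formula you spell out; your extra cross-check of the lower bound via \eqref{diag:signedPT} is a pleasant addition not present in the paper.
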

\begin{proof}
	The exact sequence \eqref{diag:signedPT} tells us that
	Conjecture~\ref{conj} implies \eqref{lemm:PoitouTate1}.

	By Propositon~\ref{prop:structureIw}, for every $w \in \Sigma$,
	$\coh^2_{\mathrm{Iw}}(F_v,T)$ is a torsion
	$\Zp[[\Gamma]]$-module. Hence, using the exact
	sequence~\eqref{diag:finePT}, the first statement of the Lemma
	implies the second.

	Finally, thanks to \cite[Proposition 1.3.2 (i) $\Rightarrow$ 
	(ii)]{PRLivre}, we conclude that
	\eqref{lemm:PoitouTate2} implies \eqref{lemm:PoitouTate3}.
\end{proof}

\section{Growth of ranks}

\subsection{Bounding Mordell-Weil ranks using logarithmic matrices}
We define
\[
\Xloc(F_n)=\coker\left(\coh^1_{\mathrm{Iw},\Sigma}(F,T)\rightarrow \prod_{v|p} \frac{\coh^1(F_{n,v},T)}{A(F_{n,v})\otimes\Zp}\right).
\]
\begin{lemm}\label{lem:cokernel}
For $n\gg0$,
\[\rank_{\Zp}\Xloc(F_n)=\rank_{\Zp}\XX_p(A^\vee/F_n)+O(1).\]
\end{lemm}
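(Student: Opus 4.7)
The plan is to compare $\Xloc(F_n)$ with the cokernel appearing in the Poitou--Tate sequence~\eqref{diag:pPT}, and to bound the difference by controlling both the fine Selmer group rank and the cokernel of a natural descent map. Set $L_n = \prod_{v\mid p}\coh^1(F_{n,v},T)/(A(F_{n,v})\otimes\Zp)$ and let $\phi_n\colon \coh^1(F_{n,\Sigma},T)\to L_n$ be the localization map of~\eqref{diag:pPT}. Since $A^\vee[p^\infty]$ is only ramified at $\Sigma$, the map defining $\Xloc(F_n)$ factors as $\phi_n\circ g_n$, where $g_n\colon \coh^1_{\mathrm{Iw},\Sigma}(F,T)\to \coh^1(F_{n,\Sigma},T)$ is the natural projection. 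Thus $\Xloc(F_n) = L_n/\im(\phi_n\circ g_n)$, and the proof reduces to comparing this quotient to $L_n/\im\phi_n$ and then to $\XX_p(A^\vee/F_n)$.

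Exactness of~\eqref{diag:pPT} yields the identity
\[
\rank_{\Zp}(L_n/\im\phi_n) = \rank_{\Zp}\XX_p(A^\vee/F_n) - \rank_{\Zp}\XX_0(A^\vee/F_n).
\]
To handle the last term, I would dualize Lemma~\ref{lemm:control}, so that $\XX_0(A^\vee/F_\infty)_{\Gamma_n}\to \XX_0(A^\vee/F_n)$ has finite kernel and cokernel of bounded order and the two modules have identical $\Zp$-rank. Since $\XX_0(A^\vee/F_\infty)$ is a torsion $\Zp[[\Gamma]]$-module by Lemma~\ref{lemm:PoitouTate}(\ref{lemm:PoitouTate1}), a standard calculation on the elementary divisors shows that the $\Gamma_n$-coinvariants of a torsion $\Zp[[\Gamma]]$-module have $\Zp$-rank uniformly bounded by the $\lambda$-invariant (the $\mu$-part being $\Zp$-torsion). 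Hence $\rank_{\Zp}\XX_0(A^\vee/F_n) = O(1)$.

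Next, I would show that the cokernel of $g_n$ has $\Zp$-rank $O(1)$. The natural projection factors through the $\Gamma_n$-coinvariants, and deriving the isomorphism $\mathbf{R}\Gamma(F_\Sigma/F_n,T) \simeq \mathbf{R}\Gamma_{\mathrm{Iw},\Sigma}(F,T)\otimes_{\Zp[[\Gamma]]}^{\mathbf{L}}\Zp[[\Gamma]]/\omega_n$ produces a short exact sequence
\[
0 \to \coh^1_{\mathrm{Iw},\Sigma}(F,T)_{\Gamma_n} \to \coh^1(F_{n,\Sigma},T) \to \coh^2_{\mathrm{Iw},\Sigma}(F,T)^{\Gamma_n} \to 0,
\]
whose first map is the one induced by $g_n$. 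By Lemma~\ref{lemm:PoitouTate}(\ref{lemm:PoitouTate2}), $\coh^2_{\mathrm{Iw},\Sigma}(F,T)$ is torsion over $\Zp[[\Gamma]]$, so its $\Gamma_n$-invariants have $\Zp$-rank $O(1)$ by the same argument as above. As $\im\phi_n/\im(\phi_n\circ g_n)$ is a quotient of $\coker(g_n)$, this gives $\rank_{\Zp}\Xloc(F_n) = \rank_{\Zp}(L_n/\im\phi_n) + O(1)$, and combining the two displayed estimates yields the claim.

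The main obstacle will be the descent short exact sequence in the last step: one must justify the derived tensor product formalism (or, equivalently, the appropriate Hochschild--Serre spectral sequence for Iwasawa cohomology) and verify that the torsionness given by Lemma~\ref{lemm:PoitouTate} genuinely controls the $\Gamma_n$-invariants of $\coh^2_{\mathrm{Iw},\Sigma}(F,T)$ uniformly in $n$. The other steps are essentially bookkeeping once the torsion statements of Lemma~\ref{lemm:PoitouTate} and the control theorem of Lemma~\ref{lemm:control} are in hand.
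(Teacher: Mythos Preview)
Your proposal is correct and follows the same overall strategy as the paper: bound $\rank_{\Zp}\XX_0(A^\vee/F_n)$ via the control theorem (Lemma~\ref{lemm:control}) and the torsionness of $\XX_0(A^\vee/F_\infty)$ (Lemma~\ref{lemm:PoitouTate}\eqref{lemm:PoitouTate1}), then read off the result from the Poitou--Tate sequence~\eqref{diag:pPT}.

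The one genuine difference is that you are more careful than the paper about the passage from $\coh^1_{\mathrm{Iw},\Sigma}(F,T)$ to $\coh^1(F_{n,\Sigma},T)$. The paper's proof simply writes ``the lemma follows from~\eqref{diag:pPT}'' after establishing $\rank_{\Zp}\XX_0(A^\vee/F_n)=O(1)$, leaving implicit the fact that the cokernel of the descent map $g_n\colon \coh^1_{\mathrm{Iw},\Sigma}(F,T)\to \coh^1(F_{n,\Sigma},T)$ has bounded $\Zp$-rank. You make this explicit via the short exact sequence identifying $\coker(g_n)$ with $\coh^2_{\mathrm{Iw},\Sigma}(F,T)^{\Gamma_n}$ and invoking Lemma~\ref{lemm:PoitouTate}\eqref{lemm:PoitouTate2}. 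This extra step is standard (it is the Iwasawa-theoretic descent spectral sequence, as in \cite[Proposition~A.2.3]{PRLivre} or the equivalent derived-category statement you cite), so the paper's omission is not a gap so much as an elision; but your version is the more complete argument, and your caveat about justifying the descent exact sequence is well placed.
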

\begin{proof}
By Lemma~\ref{lemm:control},
\[
\rank_{\Zp}\XX_0(A^\vee/F_n)=\rank_{\Zp}\XX_0(A^\vee/F_\infty)_{\Gamma_n}.
\]
Since Lemma~\ref{lemm:PoitouTate}\eqref{lemm:PoitouTate1} says that $\XX_0(A^\vee/F_\infty)$ is $\Zp[[\Gamma]]$-torsion, we have
\[\rank_{\Zp}\XX_0(A^\vee/F_\infty)_{\Gamma_n}=O(1)\]
for $n\gg0$. Hence, the lemma follows from \eqref{diag:pPT}.
\end{proof}
Therefore, in order to bound the Mordell-Weil rank of $A^\vee(F_n)$, it is enough to bound $\rank_{\Zp}\Xloc(F_n)$ thanks to \eqref{eq:sel}. We explain below how we may obtain a bound on $\rank_{\Zp}\Xloc(F_n)$  using the logarithmic matrices we studied in \S\ref{sec:coleman}.

From now on, we fix a family of classes $c_1,c_2,\ldots, c_{g[F:\Q]}\in \coh^1_{\mathrm{Iw},\Sigma}(F,T)$ such that $\coh^1_{\mathrm{Iw},\Sigma}(F,T)/\langle c_1,\ldots, c_{g[F:\Q]}\rangle$ is $\Zp[[\Gamma]]$-torsion (their existence is guaranteed by Lemma~\ref{lemm:PoitouTate}\eqref{lemm:PoitouTate3}).

	The composition
	\[
		\coh^1_{\mathrm{Iw},\Sigma}(F,T) \xrightarrow{(\loc_v)_v} \prod_{v\mid p}\coh^1_{\mathrm{Iw},\mathrm{cyc}}(F_v,T) \xrightarrow{(\col_{T,J_v})_v} \prod_{k=1}^{g[F:\Q]}\Lambda
	\]
	is a $\Lambda$-homomorphism between two $\Lambda$-modules of rank $g[F:\Q]$.
	Let us write
\[
	\col_{T,\uJ}(\bc)=\det\left(\col_{T,J_v}\circ \loc_v(c_i)\right)_{v|p,1\le i\le g[F:\Q]}.
\]
	
\begin{lemm}\label{lem:nonzero}
Let $\uJ\in \II$. Suppose that the Selmer group $\sel_{\uJ}(A^\vee/F_\infty)$ is $\Zp[[\Gamma]]$-cotorsion. Then, $\col_{T,\uJ}(\bc)\ne 0$.
\end{lemm}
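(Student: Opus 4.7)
The plan is to combine the Poitou--Tate exact sequence~\eqref{diag:signedPT} (applied with $\uI=\uJ$) with the defining property of the classes $c_i$, and to conclude by elementary linear algebra after inverting all non-zero-divisors of $\Lambda$. Writing $\Psi:=(\col_{T,J_v})_{v\mid p}\circ (\loc_v)_{v\mid p}$ for the composition displayed just before the lemma, the quantity $\col_{T,\uJ}(\bc)$ is precisely the determinant of the $g[F:\Q]\times g[F:\Q]$ matrix whose columns are $\Psi(c_1),\ldots,\Psi(c_{g[F:\Q]})\in\prod_{k=1}^{g[F:\Q]}\Lambda$, so the lemma will follow from the $\mathrm{Frac}(\Lambda)$-linear independence of these images.

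First I would use the hypothesis that $\XX_{\uJ}(A^\vee/F_\infty)$ is $\Zp[[\Gamma]]$-torsion, together with Lemma~\ref{lemm:PoitouTate}\eqref{lemm:PoitouTate1}, to conclude from~\eqref{diag:signedPT} that the cokernel of
\[
\coh^1_{\mathrm{Iw},\Sigma}(F,T)\longrightarrow \prod_{v\mid p}\frac{\coh^1_{\mathrm{Iw}}(F_v,T)}{\ker\col_{J_v}}
\]
is $\Lambda$-torsion. Each quotient $\coh^1_{\mathrm{Iw}}(F_v,T)/\ker\col_{J_v}$ embeds into $\Lambda^{|J_v|}$ via $\col_{T,J_v}$ with torsion cokernel (it is a torsion-free submodule of a free module of the same $\Lambda$-rank $|J_v|$), so composing yields a $\Lambda$-homomorphism $\Psi\colon\coh^1_{\mathrm{Iw},\Sigma}(F,T)\to \prod_{k=1}^{g[F:\Q]}\Lambda$ whose cokernel is again $\Lambda$-torsion.

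Next I would invoke Lemma~\ref{lemm:PoitouTate}\eqref{lemm:PoitouTate3} and the very choice of the $c_i$: the submodule $\langle c_1,\ldots,c_{g[F:\Q]}\rangle$ has torsion quotient inside $\coh^1_{\mathrm{Iw},\Sigma}(F,T)$. Tensoring the whole picture with the total ring of fractions $\mathrm{Frac}(\Lambda)$ simultaneously kills the cokernel of $\Psi$ and the quotient $\coh^1_{\mathrm{Iw},\Sigma}(F,T)/\langle c_i\rangle$; thus $\Psi$ becomes an isomorphism between two $\mathrm{Frac}(\Lambda)$-vector spaces of dimension $g[F:\Q]$, while the $c_i$ become a $\mathrm{Frac}(\Lambda)$-basis of the source. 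Their images therefore form a $\mathrm{Frac}(\Lambda)$-basis of $\mathrm{Frac}(\Lambda)^{g[F:\Q]}$, which makes the matrix $(\col_{T,J_v}\circ\loc_v(c_i))$ non-singular and so $\col_{T,\uJ}(\bc)\ne 0$.

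The only subtle point to verify is the compatibility of ranks and torsion across the two Iwasawa algebras $\Lambda=\Zp[\Delta][[\Gamma]]$ and $\Zp[[\Gamma]]$ appearing in the paper. This should be harmless since $|\Delta|$ is coprime to $p$, so the decomposition of $\Lambda$ into $\Delta$-isotypic components lets every rank and torsion statement above be checked one character $\eta$ at a time, which is exactly the convention in~\S\ref{subsec:cyclotomic}.
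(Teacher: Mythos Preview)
Your proof is correct and follows essentially the same approach as the paper's: both use the Poitou--Tate exact sequence~\eqref{diag:signedPT} together with the torsionness of $\XX_{\uJ}(A^\vee/F_\infty)$ and $\XX_0(A^\vee/F_\infty)$ to deduce that the localization map has torsion cokernel, then conclude by comparing ranks. The paper compresses all of this into the single line ``our result follows from~\eqref{diag:signedPT}'', whereas you have carefully spelled out the passage to $\mathrm{Frac}(\Lambda)$ and the linear-algebra step; your parenthetical about the rank of $\coh^1_{\mathrm{Iw}}(F_v,T)/\ker\col_{J_v}$ is exactly \cite[Proposition~2.21]{BLIntegral} (the commented-out Lemma in the paper), which is the one input you use without an explicit citation.
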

\begin{proof}
Recall from  Lemma~\ref{lemm:PoitouTate}\eqref{lemm:PoitouTate1}  that $\XX_0(A^\vee/F_\infty)$ is $\Zp[[\Gamma]]$-torsion. By assumption,  $\XX_{\uJ}(A^\vee/F_\infty)$ is also $\Zp[[\Gamma]]$-torsion. Therefore, our result follows from \eqref{diag:signedPT}.
\end{proof}

\begin{prop}\label{prop:boundXloc}
Let $\theta$ a character on $\Gamma$ of conductor $p^{n+1}$ which is trivial on $\Delta$. Then, $e_\theta\cdot \Xloc(F_{n})\otimes_{\Zp} \Qp=0$ if
\[
\sum_{\uJ\in\II}\left(H_{\uI_0,\uJ,n}\col_{T,\uJ}(\bc)\right)(\theta)\ne0.
\]
\end{prop}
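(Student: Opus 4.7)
The plan is to apply Proposition~\ref{prop:vanish2} to the wedge $\mathbf{z}=\mathrm{loc}(c_1)\wedge\cdots\wedge \mathrm{loc}(c_{g[F:\Q]})$, where $\mathrm{loc}(c_i)=(\loc_v(c_i))_{v\mid p}\in\prod_{v\mid p}\coh^1_{\mathrm{Iw},\mathrm{cyc}}(F_v,T)$. By multilinearity of the determinant, $\col_{T,\uJ}(\mathbf{z})=\col_{T,\uJ}(\bc)$ for every $\uJ\in\II$, so that the non-vanishing hypothesis of the proposition matches exactly the hypothesis in the statement.

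\emph{First}, I would reformulate the desired conclusion as the surjectivity, after applying $e_\theta$ and tensoring with $\Qp$, of the localization map
\[
\coh^1_{\mathrm{Iw},\Sigma}(F,T)\longrightarrow \prod_{v\mid p}\frac{\coh^1(F_{n,v},T)}{A(F_{n,v})\otimes\Zp}
\]
defining $\Xloc(F_n)$. A rank count, using the local Euler characteristic formula together with the fact that good supersingular reduction at $v$ gives $\rank_{\Zp}\bigl(A(F_{n,v})\otimes\Zp\bigr) = g[F_{n,v}:\Qp]$, shows that the $e_\theta$-component of the target is a $\Qp(\theta)$-vector space of dimension $g[F:\Q]$, matching the $\Lambda$-rank of the source furnished by Lemma~\ref{lemm:PoitouTate}\eqref{lemm:PoitouTate3}. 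Therefore it is enough to prove that the $g[F:\Q]$ classes $\loc(c_i)$ remain $\Qp(\theta)$-linearly independent after projection to this component.

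\emph{Second}, this linear independence is equivalent to the non-vanishing of the top wedge $\mathbf{z}$ inside $e_\theta\cdot\bigwedge^{g[F:\Q]}\prod_{v\mid p}\frac{\coh^1(F_{n,v},V)}{A(F_{n,v})\otimes\Qp}$. Because $\theta$ is trivial on $\Delta=\gal(F_v(\mu_{p^{n+1}})/F_{n,v})$ and $|\Delta|=p-1$ is invertible on the ambient $\Qp$-vector spaces, inflation--restriction (together with Lemma~\ref{lemm:torsion} to handle the Kummer quotient, and Galois descent $A(F_v(\mu_{p^{n+1}}))^\Delta=A(F_{n,v})$ for the abelian variety) identifies this $e_\theta$-component with $e_\theta\cdot\bigwedge^{g[F:\Q]}\prod_{v\mid p}\frac{\coh^1(F_v(\mu_{p^{n+1}}),V)}{A(F_v(\mu_{p^{n+1}}))\otimes\Qp}$.

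\emph{Finally}, by Proposition~\ref{prop:vanish2}, the non-vanishing of $\mathbf{z}$ in this last group is equivalent to the assumed non-vanishing of $\sum_{\uJ\in\II}(H_{\uI_0,\uJ,n}\col_{T,\uJ}(\bc))(\theta)$, completing the proof. The main technical obstacle is the second step: executing the Galois descent between the $F_{n,v}$- and $F_v(\mu_{p^{n+1}})$-levels in a way that commutes with both the Kummer quotient and the formation of wedge powers, and matching the dimension count in the first step precisely, so that linear independence of the $g[F:\Q]$ images of the $c_i$ really is equivalent to surjectivity of the localization map on the $e_\theta$-component.
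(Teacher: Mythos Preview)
Your proposal is correct and follows the same approach as the paper: apply Proposition~\ref{prop:vanish2} to the wedge of the localizations of the $c_i$, and use a dimension count on the target to deduce that non-vanishing of the top exterior power forces surjectivity on the $e_\theta$-component. The paper obtains the dimension count via the dual exponential map, identifying $\frac{\coh^1(F_v(\mu_{p^{n+1}}),V)}{A(F_v(\mu_{p^{n+1}}))\otimes\Qp}\cong\Qp[\gal(F_v(\mu_{p^{n+1}})/F_v)]^{\oplus gf_v}$ (so that the top wedge is visibly one-dimensional) rather than through local Euler characteristics, and it silently identifies the $e_\theta$-components at the $F_{n,v}$ and $F_v(\mu_{p^{n+1}})$ levels where you spell out the inflation--restriction argument.
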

\begin{proof}
Note that
\[
\frac{\coh^1(F_v(\mu_{p^{n+1}}),V)}{A(F_v(\mu_{p^{n+1}}))\otimes\Qp}\cong F_v[\gal(F_{v}(\mu_{p^{n+1}})/F_v)]^{\oplus g}\cong \Qp[\gal(F_{v}(\mu_{p^{n+1}})/F_v)]^{\oplus gf_v}
\]
as  $\Gamma$-modules via the Bloch-Kato dual exponential map. Thus,
\[
\bigwedge^{g[F:\Q]}\left(\prod_{v|p}\frac{\coh^1(F_v(\mu_{p^{n+1}}),V)}{A(F_v(\mu_{p^{n+1}}))\otimes\Qp}\right)
\cong \Qp[\gal(F(\mu_{p^{n+1}})/F)].
\]
In particular, 
$$e_\theta\cdot  \bigwedge^{g[F:\Q]}\left(\prod_{v|p}\frac{\coh^1(F_v(\mu_{p^{n+1}}),V)}{A(F_v(\mu_{p^{n+1}}))\otimes\Qp}\right)=e_\theta\cdot  \bigwedge^{g[F:\Q]}\left(\prod_{v|p}\frac{\coh^1(F_{n,v},V)}{A(F_{n,v})\otimes\Qp}\right)$$
is a one-dimensional $\Qp(\theta)$-vector space. By Proposition~\ref{prop:vanish2}, our hypothesis on $\wedge c_i$  tells us that its image in
this vector space is non-zero. Therefore, the $e_\theta$-component of the cokernel of
\[
\bigwedge^{g[F:\Q]}\coh^1_{\mathrm{Iw},\Sigma}(F,T)\otimes\Qp \rightarrow \bigwedge^{g[F:\Q]}\prod_{v|p} \frac{\coh^1(F_{n,v},V)}{A(F_{n,v})\otimes\Qp}
\] 
is zero. Hence the result.
\end{proof}

\begin{theo}\label{thm:key}
Let $\theta$ a character  as in the statement of Proposition~\ref{prop:boundXloc}. Suppose that 
\begin{equation}\label{eq:key}
\sum_{\uJ}\left(H_{\uI_0,\uJ,n}\col_{T,\uJ}(\bc)\right)(\theta)\ne0,
\end{equation}
for $n\gg0$, then 
\[
\rank_{\Zp}A^\vee(F_n)=O(1).
\]
\end{theo}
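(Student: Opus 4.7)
The plan is to bound $\rank_{\Zp} A^\vee(F_n)$ by chaining together the ingredients already assembled. First, \eqref{eq:sel} gives $\rank_{\Zp} A^\vee(F_n) \leq \rank_{\Zp}\XX_p(A^\vee/F_n)$, and Lemma~\ref{lem:cokernel} rewrites this as $\rank_{\Zp}\Xloc(F_n) + O(1)$, so it suffices to show $\rank_{\Zp}\Xloc(F_n) = O(1)$.

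Since $\Xloc(F_n)$ is naturally a $\Zp[\Gamma/\Gamma_n]$-module, I will decompose
\[
\rank_{\Zp}\Xloc(F_n) = \sum_{\theta\in\widehat{\Gamma/\Gamma_n}} \dim_{\Qpbar} e_\theta\bigl(\Xloc(F_n)\otimes_{\Zp}\Qpbar\bigr)
\]
and treat each isotypic piece separately. Write $W_m = \prod_{v|p} \coh^1(F_{m,v},T)/A(F_{m,v})\otimes\Zp$, so that $\Xloc(F_m)=\coker(\coh^1_{\mathrm{Iw},\Sigma}(F,T)\to W_m)$. The core step is the following stabilization claim: for any character $\theta$ of $\Gamma$ of conductor $p^{k+1}$ and any $n\geq k$, the natural restriction map $W_k\to W_n$ induces an isomorphism $e_\theta(\Xloc(F_k)\otimes\Qp)\cong e_\theta(\Xloc(F_n)\otimes\Qp)$. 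Indeed, restriction respects the Kummer images $A(F_{m,v})\otimes\Zp$, and the identity $\mathrm{cor}\circ\mathrm{res}=[F_n:F_k]=p^{n-k}$ makes it injective on the $e_\theta(-)\otimes\Qp$ level; the identification $\coh^1(F_{m,v},V)/A(F_{m,v})\otimes\Qp\cong \Qp[\gal(F_{m,v}/F_v)]^{\oplus gf_v}$ recalled in the proof of Proposition~\ref{prop:boundXloc} shows that both $e_\theta W_k\otimes\Qpbar$ and $e_\theta W_n\otimes\Qpbar$ have $\Qpbar$-dimension $g[F:\Q]$, so the injection is an isomorphism; compatibility with the common map from $\coh^1_{\mathrm{Iw},\Sigma}(F,T)$ then transfers this to the cokernels.

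With the stabilization in hand, the remainder is bookkeeping. Pick $N_0$ such that \eqref{eq:key} holds for every $n\geq N_0$ and every character $\theta$ of conductor $p^{n+1}$. For any character $\theta$ of conductor $p^{k+1}$ with $k\geq N_0$, Proposition~\ref{prop:boundXloc} applied at level $F_k$ yields $e_\theta\Xloc(F_k)\otimes\Qp=0$, and the stabilization upgrades this to $e_\theta\Xloc(F_n)\otimes\Qp=0$ for all $n\geq k$. Consequently only characters of conductor at most $p^{N_0}$ can contribute to $\rank_{\Zp}\Xloc(F_n)$; there are at most $p^{N_0-1}$ such characters, and each contributes at most $g[F:\Q]$ (the $\Qpbar$-dimension of the ambient $e_\theta W_k\otimes\Qpbar$), so $\rank_{\Zp}\Xloc(F_n)\leq p^{N_0-1}\cdot g[F:\Q]$ independently of $n$, which proves the theorem.

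The main difficulty I anticipate lies in the stabilization claim: while the $\mathrm{cor}\circ\mathrm{res}$ identity and the dimension count are standard, one must verify carefully that restriction on local cohomology restricts to a well-defined map $W_k\to W_n$ on the Kummer quotients, and that this map is compatible with the shared map from $\coh^1_{\mathrm{Iw},\Sigma}(F,T)$, so that the isomorphism at the $e_\theta$-level on $W$ genuinely passes to the level of cokernels $\Xloc$.
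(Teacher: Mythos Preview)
Your proposal is correct and follows the same overall strategy as the paper: reduce to $\rank_{\Zp}\Xloc(F_n)=O(1)$ via Lemma~\ref{lem:cokernel} and \eqref{eq:sel}, then control the $e_\theta$-pieces using Proposition~\ref{prop:boundXloc}. The paper packages the last step more tersely via the telescoping identity
\[
\rank_{\Zp}\Xloc(F_n)-\rank_{\Zp}\Xloc(F_{n-1})=\dim_{\Qp}e_{\theta}\cdot \Xloc(F_n)\otimes_{\Zp} \Qp
\]
for $\theta$ of conductor $p^{n+1}$, which follows from the corestriction-induced surjection $W_n\otimes\Qp\twoheadrightarrow W_{n-1}\otimes\Qp$ with kernel $e_\theta W_n\otimes\Qp$; your stabilization claim via restriction is the dual formulation of the same fact.

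One small caveat on the point you yourself flagged: the square with $\mathrm{res}\colon W_k\to W_n$ and the projections $\pi_k,\pi_n$ from $\coh^1_{\mathrm{Iw},\Sigma}(F,T)$ does \emph{not} commute on the nose, since $\pi_k=\mathrm{cor}\circ\pi_n$ gives $\mathrm{res}\circ\pi_k=\mathrm{Norm}_{\Gamma_k/\Gamma_n}\circ\pi_n$ rather than $\pi_n$. On the $e_\theta$-component with $\theta$ trivial on $\Gamma_k$ this norm acts by the invertible scalar $p^{n-k}$, so your conclusion on cokernels is unaffected, but you should say this explicitly.
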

\begin{proof}
Proposition~\ref{prop:boundXloc} says that $e_\theta\cdot \Xloc(F_n)\otimes_{\Zp} \Qp=0$ for $n\gg0$. But
\[
\rank_{\Zp}\Xloc(F_n)-\rank_{\Zp}\Xloc(F_{n-1})=\dim_{\Qp}e_{\theta}\cdot \Xloc(F_n)\otimes_{\Zp} \Qp,
\]
where $\theta$ is any character of $\Gamma$ of conductor $p^{n+1}$. In particular, $\rank_{\Zp}\Xloc(F_n)=O(1)$. Lemma~\ref{lem:cokernel} now implies that
$\rank_{\Zp}\XX_p(A^\vee/F_n)=O(1)$ and our theorem follows from \eqref{eq:sel}.
\end{proof}
In other words, the key to show that the Mordell-Weil ranks of $A^\vee$ are bounded inside $F_\infty$ is to establish \eqref{eq:key}.
\subsection{Special cases}
If Conjecture~\ref{conj} holds, Lemma~\ref{lem:nonzero} tells us that $\col_{T,\uJ}(\bc)(\theta)\ne0$ if $\theta$ is a character whose conductor is sufficiently large. However, this is not enough to verify \eqref{eq:key} since we do not have an explicit description of $H_{\uI_0,\uJ,n}$ in the most general setting. In this section, we will show that when the matrices $C_{\vp,v}$ are explicit enough, it is possible to establish \eqref{eq:key} by calculating the $p$-adic valuations of $H_{\uI_0,\uJ,n}(\theta)$.

\subsubsection{Block anti-diagonal  matrices}We suppose in this section that for each $v$, we may find a basis of $\Dcrisv(T)$ such that the matrix $C_v$ is of the form $\left(
\begin{array}{c|c}
0&*\\ \hline
*&0
\end{array}
\right)$, where $*$ represents a $gf_v\times gf_v$ matrix defined over $\Zp$. This is the same as saying that $\vp(v_i)\notin\Fil^0\Dcrisv(T)$ and $\vp^2(v_i)\in\Fil^0\Dcrisv(T)$ for all $i\in \{1,\ldots gf_v\}$. It can be thought of as the analogue of $a_p=0$ for supersingular elliptic curves. In particular,  
\begin{equation} \label{eq:specialCn}
C_{v,n}=\left(
\begin{array}{c|c}
0&B_{v,1}\\ \hline
\Phi_{p^n}(1+X)B_{v,2}&0
\end{array}
\right)
\end{equation}
 for some invertible $gf_v\times gf_v$ matrices $B_{v,1}$ and $B_{v,2}$ that are defined over $\Zp$ with $\det(B_{v,1}B_{v,2})=1$ (since $\det C_{\vp,v}=p^{-gf_v}$).
For all $n\ge1$, we fix a primitive $p^n$-th root of unity $\zeta_{p^n}$  and we write $\epsilon_n=\zeta_{p^n}-1$. 

\begin{lemm}\label{lem:explicitHn}
Suppose that $C_v$ is block anti-diagonal for all $v$. Then, for all $n\ge1$, we have
\[
H_{v,n}(\zeta_{p^n}-1)=
\begin{cases}
\left(
\begin{array}{c|c}
0&\delta_n (B_{v,1}B_{v,2})^{(n-1)/2}B_{v,1}\\ \hline
0&0
\end{array}
\right)&\text{if $n$ odd,}\\
\left(
\begin{array}{c|c}
\delta_n(B_{v,1}B_{v,2})^{n/2}&0\\ \hline
0&0
\end{array}
\right)
&\text{if $n$ even.}
\end{cases}
\]
Here, the constant $\delta_n$ is given by
\[
\delta_n=\begin{cases}
\frac{\epsilon_1}{\epsilon_2}\cdot \frac{\epsilon_3}{\epsilon_4}\cdots\frac{\epsilon_{n-2}}{\epsilon_{n-1}}&\text{if $n$ odd,}\\
\frac{\epsilon_1}{\epsilon_2}\cdot \frac{\epsilon_3}{\epsilon_4}\cdots\frac{\epsilon_{n-1}}{\epsilon_{n}}&\text{if $n$ even.}
\end{cases}
\]
\end{lemm}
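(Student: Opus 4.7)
The plan is to evaluate the matrices $C_{v,k}(X)$ at $X = \epsilon_n$ one at a time and then multiply them from left to right, exploiting the fact that the leftmost factor $C_{v,n}(\epsilon_n)$ has only its top-right block surviving.

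First I would record the following explicit evaluations. Since $1+\epsilon_n = \zeta_{p^n}$ is a primitive $p^n$-th root of unity, $\Phi_{p^n}(1+\epsilon_n)=0$; and for $1 \leq k < n$, using $\Phi_{p^k}(Y)=(Y^{p^k}-1)/(Y^{p^{k-1}}-1)$ and the convention $\zeta_{p^{n-k}}:=\zeta_{p^n}^{p^k}$, I get
\[
\Phi_{p^k}(\zeta_{p^n}) \;=\; \frac{\zeta_{p^{n-k}}-1}{\zeta_{p^{n-k+1}}-1} \;=\; \frac{\epsilon_{n-k}}{\epsilon_{n-k+1}}.
\]
Writing $a_k:=\epsilon_{n-k}/\epsilon_{n-k+1}$, the evaluated matrices become
\[
M_n:=C_{v,n}(\epsilon_n)=\begin{pmatrix} 0 & B_{v,1} \\ 0 & 0 \end{pmatrix},
\qquad
M_k:=C_{v,k}(\epsilon_n)=\begin{pmatrix} 0 & B_{v,1} \\ a_k B_{v,2} & 0 \end{pmatrix} \;\;(k<n),
\]
and $H_{v,n}(\epsilon_n)=M_n M_{n-1}\cdots M_1$.

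Next I would compute the partial products $P_j := M_n M_{n-1}\cdots M_{n-j+1}$ by induction on $j$. The key structural observation is that left-multiplication by $M_n$ kills the bottom block rows, so every $P_j$ has its lower half zero. A direct block calculation then shows that the upper half alternates between a top-left and a top-right block depending on the parity of $j$: if $P_j$ has the form $\bigl(\begin{smallmatrix}X&0\\0&0\end{smallmatrix}\bigr)$, then right-multiplying by $M_{n-j}$ yields $\bigl(\begin{smallmatrix}0&XB_{v,1}\\0&0\end{smallmatrix}\bigr)$, while if $P_j=\bigl(\begin{smallmatrix}0&Y\\0&0\end{smallmatrix}\bigr)$, then right-multiplying by $M_{n-j}$ yields $\bigl(\begin{smallmatrix}a_{n-j}YB_{v,2}&0\\0&0\end{smallmatrix}\bigr)$. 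Thus at each ``even-to-odd'' step an extra factor of $B_{v,1}$ is appended, and at each ``odd-to-even'' step an extra factor of $B_{v,2}$ is appended together with the scalar $a_{n-j}$, which turns each existing trailing $B_{v,1}$ into a fresh factor of $B_{v,1}B_{v,2}$.

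Finally I would read off the product at $j = n$: for $n$ odd the result is $\bigl(\begin{smallmatrix}0&\alpha(B_{v,1}B_{v,2})^{(n-1)/2}B_{v,1}\\0&0\end{smallmatrix}\bigr)$ with $\alpha=a_{n-1}a_{n-3}\cdots a_2$, and for $n$ even it is $\bigl(\begin{smallmatrix}\alpha(B_{v,1}B_{v,2})^{n/2}&0\\0&0\end{smallmatrix}\bigr)$ with $\alpha=a_{n-1}a_{n-3}\cdots a_1$. Substituting $a_k=\epsilon_{n-k}/\epsilon_{n-k+1}$ into these products telescopes the indices into the stated expression for $\delta_n$. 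The only real obstacle is bookkeeping: one must keep the parity, the matrix alternation, and the telescoping of the $a_k$'s perfectly aligned. I would verify the base cases $n=1$ and $n=2$ by hand to anchor the induction, which rules out off-by-one errors in the $\delta_n$ formula.
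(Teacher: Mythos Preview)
Your argument is correct and follows the same approach as the paper: evaluate each factor $C_{v,k}$ at $X=\epsilon_n$ using the anti-diagonal form and multiply out. The paper's proof is terser—it simply displays the product $M_nM_{n-1}\cdots M_1$ with the evaluated scalars and says ``Hence the result on multiplying out these matrices''—while you have spelled out the alternating block structure via the partial products $P_j$, but the content is identical. One small wording quibble: substituting $a_k=\epsilon_{n-k}/\epsilon_{n-k+1}$ into $a_{n-1}a_{n-3}\cdots$ is a direct relabeling rather than a telescoping, since no cancellation occurs.
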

\begin{proof}
Thanks to \eqref{eq:specialCn}, we have explicitly
\[
H_{v,n}(\epsilon_n)=\left(
\begin{array}{c|c}
0&B_{v,1}\\ \hline
0&0
\end{array}
\right)\cdot\left(
\begin{array}{c|c}
0&B_{v,1}\\ \hline
\frac{\epsilon_1}{\epsilon_2} B_{v,2}&0
\end{array}
\right)\cdot\left(
\begin{array}{c|c}
0&B_{v,1}\\ \hline
\frac{\epsilon_2}{\epsilon_3}B_{v,2}&0
\end{array}
\right)\cdots \left(
\begin{array}{c|c}
0&B_{v,1}\\ \hline
\frac{\epsilon_{n-1}}{\epsilon_n}B_{v,2}&0
\end{array}
\right).
\]
Hence the result on multiplying out these matrices.
\end{proof}

 Recall that $\uI_0=(I_{v,0})_{v|p}$, where $I_{v,0}=\{1,\ldots,gf_v\}$. Let $\uI_1$ be the complement of $\uI_0$, that is $\uI_1=(I_{v,1})_{v|p}$, where $I_{v,1}=\{gf_v+1,\ldots 2gf_v\}$.

\begin{lemm}
Suppose that  $C_v$ is block anti-diagonal for all $v$ and that the Selmer groups $\sel_{\uI_0}(A/F_\infty)$ and $\sel_{\uI_1}(A/F_\infty)$ are both $\Zp[[\Gamma]]$-cotorsion. Then, \eqref{eq:key} holds whenever $n$ is sufficiently large.
\end{lemm}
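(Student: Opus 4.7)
The plan is to exploit the explicit form of $H_{v,n}(\epsilon_n)$ supplied by Lemma~\ref{lem:explicitHn} to collapse the sum over $\uJ \in \II$ in \eqref{eq:key} to a single non-trivial summand, and then to apply Lemma~\ref{lem:nonzero} together with Weierstrass preparation to show that this surviving summand does not vanish for $n$ sufficiently large.

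First, I would exploit the block-diagonal structure of $H_n$: since $H_n$ has $H_{v,n}$ along the diagonal, the minor $H_{\uI_0,\uJ,n}$ factors as a product $\prod_{v\mid p} H_{I_{v,0},J_v,n}$ of block minors and vanishes unless $|J_v| = gf_v$ for every $v \mid p$. By Lemma~\ref{lem:explicitHn}, at a character $\theta$ of conductor $p^{n+1}$ the matrix $H_{v,n}(\theta)$ is supported in its upper-right $gf_v \times gf_v$ block when $n$ is odd, and in its upper-left block when $n$ is even. Since $\uI_0$ already selects the top $gf_v$ rows at each $v$, the only $\uJ \in \II$ for which $H_{\uI_0,\uJ,n}(\theta)$ can be non-zero is $\uJ = \uI_1$ for $n$ odd and $\uJ = \uI_0$ for $n$ even. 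For these distinguished choices, the explicit formula in Lemma~\ref{lem:explicitHn} yields
\[
H_{\uI_0,\uJ,n}(\theta) \;=\; \delta_n^{g[F:\Q]} \prod_{v \mid p} \det(M_v),
\]
where each $M_v$ is a product of the invertible matrices $B_{v,1}, B_{v,2}$ (which have determinant a unit in $\Zp$), and in particular is non-zero.

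It therefore suffices to verify that $\col_{T,\uI_1}(\bc)(\theta) \neq 0$ for $n$ odd and $\col_{T,\uI_0}(\bc)(\theta) \neq 0$ for $n$ even, with $n$ large. By Lemma~\ref{lem:nonzero}, the cotorsion hypotheses on $\sel_{\uI_0}$ and $\sel_{\uI_1}$ imply that both $\col_{T,\uI_0}(\bc)$ and $\col_{T,\uI_1}(\bc)$ are non-zero elements of $\Lambda$. Pulling back each $\eta$-isotypic component to $\Zp[[X]]$ via the fixed isomorphism $\Zp[[\Gamma]] \simeq \Zp[[X]]$, the Weierstrass preparation theorem tells us that a non-zero power series in $\Zp[[X]]$ has only finitely many zeros in the open unit disk of $\Cp$. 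Consequently, neither series can vanish at $X = \zeta_{p^m} - 1$ once $m$ is sufficiently large, so for $n \gg 0$ neither $\col_{T,\uI_0}(\bc)(\theta)$ nor $\col_{T,\uI_1}(\bc)(\theta)$ vanishes at any character $\theta$ of conductor $p^{n+1}$. This establishes \eqref{eq:key}.

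The only step requiring genuine bookkeeping is the first: one must check carefully that, at $\theta$ of conductor $p^{n+1}$, every $\uJ \in \II$ other than the distinguished one produces a vanishing minor. This is where the block anti-diagonal form of $C_v$ is crucial, and a parity-dependent case analysis is needed. Once the reduction to a single summand is in place, the rest of the argument is a formal non-vanishing statement extracted from Lemma~\ref{lem:nonzero} and the finiteness of zeros of non-zero Iwasawa functions.
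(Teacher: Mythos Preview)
Your proposal is correct and follows essentially the same approach as the paper's proof: use Lemma~\ref{lem:explicitHn} to see that only one $\uJ$ (namely $\uI_0$ or $\uI_1$ according to the parity of $n$) survives in the sum, check that the surviving minor is non-zero, and then invoke Lemma~\ref{lem:nonzero} together with the finiteness of zeros of a non-zero element of $\Lambda$. Your additional remarks on the block-diagonal factorization of $H_{\uI_0,\uJ,n}$ and on Weierstrass preparation simply make explicit details the paper leaves implicit.
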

\begin{proof}
Suppose that $\theta$ sends the fixed topological generator to $\zeta_{p^n}$. By Lemma~\ref{lem:explicitHn}, 
\[
H_{\uI_0,\uJ,n}(\theta)=0
\]
unless $\uJ=\uI_0$ and $n$ is even, or $\uJ=\uI_1$ and $n$ is odd. Let us write $\uJ_n=\uI_{\frac{1-(-1)^n}{2}}$. Then, the left-hand side of \eqref{eq:key} is equal to
\[
\left(H_{\uI_0,\uJ_n,n}\col_{T,\uJ_n}(\bc)\right)(\theta).
\]
Lemma~\ref{lem:explicitHn} says that $H_{\uI_0,\uJ_n,n}(\theta)$ is never zero. Our hypothesis on the $\uJ_n$-Selmer group and Lemma~\ref{lem:nonzero} tell us that $\left(\col_{T,\uJ_n}(\bc)\right)(\theta)\ne0$ when $n$ is sufficiently large. Hence  the result follows.
\end{proof}
If we combine this with Theorem~\ref{thm:key}, we deduce our first result on the Mordell-Weil ranks of $A^\vee$.
\begin{coro}\label{cor:antidiag}
Suppose that  $C_v$ is block anti-diagonal for all $v$ and that the Selmer groups $\sel_{\uI_0}(A/F_\infty)$ and $\sel_{\uI_1}(A/F_\infty)$ are both $\Zp[[\Gamma]]$-cotorsion. Then, \[
\rank_{\Zp}A^\vee(F_n)=O(1).
\]
\end{coro}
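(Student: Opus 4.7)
The plan is to assemble the corollary from two pieces already in place: the unnamed lemma immediately preceding the corollary, and Theorem~\ref{thm:key}. Under the hypotheses of the corollary — block anti-diagonal form for each $C_v$ together with $\Zp[[\Gamma]]$-cotorsion of both $\sel_{\uI_0}(A/F_\infty)$ and $\sel_{\uI_1}(A/F_\infty)$ — the preceding lemma already verifies the key non-vanishing \eqref{eq:key} for every sufficiently large $n$, with $\theta$ the character of $\Gamma$ of conductor $p^{n+1}$ (trivial on $\Delta$, as required by Proposition~\ref{prop:boundXloc}). So the first step of my plan is simply to quote that lemma.

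The second step is to feed this non-vanishing into Theorem~\ref{thm:key}, which then delivers $\rank_{\Zp} A^\vee(F_n)=O(1)$ directly. For bookkeeping, the internal chain inside Theorem~\ref{thm:key} that I would rely on runs as follows. Proposition~\ref{prop:boundXloc} converts the non-vanishing of $\sum_{\uJ}\bigl(H_{\uI_0,\uJ,n}\col_{T,\uJ}(\bc)\bigr)(\theta)$ into the vanishing $e_\theta\cdot\Xloc(F_n)\otimes_{\Zp}\Qp=0$; the telescoping identity
\[
\rank_{\Zp}\Xloc(F_n)-\rank_{\Zp}\Xloc(F_{n-1})=\dim_{\Qp}\bigl(e_\theta\cdot\Xloc(F_n)\otimes_{\Zp}\Qp\bigr)
\]
then yields $\rank_{\Zp}\Xloc(F_n)=O(1)$; Lemma~\ref{lem:cokernel} transports this to $\rank_{\Zp}\XX_p(A^\vee/F_n)=O(1)$; and finally the short exact sequence \eqref{eq:sel} produces the Mordell-Weil bound.

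I anticipate no genuine obstacle here: all the computational content has already been absorbed into the preceding lemma, which exploits the explicit formula for $H_{v,n}(\zeta_{p^n}-1)$ from Lemma~\ref{lem:explicitHn}, the observation that in the block anti-diagonal setting only a single $\uJ$, namely $\uJ_n=\uI_{(1-(-1)^n)/2}$, survives evaluation at $\theta$ in the sum appearing in \eqref{eq:key}, and the non-vanishing $\bigl(\col_{T,\uJ_n}(\bc)\bigr)(\theta)\ne 0$ for $n$ large supplied by Lemma~\ref{lem:nonzero} applied to $\uJ\in\{\uI_0,\uI_1\}$. The corollary itself is thus a one-line invocation of Theorem~\ref{thm:key}.
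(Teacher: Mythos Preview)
Your proposal is correct and matches the paper's own argument exactly: the paper simply says that combining the preceding lemma with Theorem~\ref{thm:key} yields the corollary, which is precisely what you do. Your additional unpacking of the internal logic of Theorem~\ref{thm:key} is accurate but not required.
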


\subsubsection{Block anti-diagonal modulo $p$  matrices}
We suppose in this section that for each $v$, we may find a basis of $\Dcrisv(T)$ such that the matrix $C_v$ is of the form 
\[
C_{v}=\left(
\begin{array}{c|c}
pA_{v,1}&B_{v,1}\\ \hline
B_{v,2}&pA_{v,2}
\end{array}
\right),
\]
where $A_{v,1},A_{v,2},B_{v,1}$ $B_{v,2}$ are some $gf_v\times gf_v$ matrices over $\Zp$ with $\det(B_{v,1}),\det(B_{v,2})\in\Zp^\times$. In other words, $C_v$ is congruent to a block anti-diagonal matrix modulo $p$. Note that in the case of elliptic curves, given an basis of $v_1$ of $\Fil^0\Dcrisv(T)$, the pair $v_1,\vp(v_1)$ form a basis of $\Dcrisv(T)$ by Fontaine-Laffaille theory. The matrix of $\vp$ is given by $\begin{pmatrix}
0&-\frac{1}{p}\\1&\frac{a_p}{p}
\end{pmatrix}$. Thus, $C_v=\begin{pmatrix}
0&-1\\1&a_p
\end{pmatrix}$ is a block anti-diagonal matrix mod $p$ whenever $p|a_p$. We shall discuss in this next section that the same holds for abelian varieties of $\GL_2$-type in the next section.

From now on, for each prime $v$, $\ord_p$ denotes the normalized $p$-adic valuation on $\overline{F_v}$ with $\ord_p(p)=1$.
Recall from the previous section that $\uJ_n=\uI_{\frac{1-(-1)^n}{2}}$.
\begin{lemm}\label{lem:blockanti}
Suppose that $C_v$ is block anti-diagonal mod $p$ for all $v$. Then,
\[
\ord_p\left(H_{\uI_0,\uJ,n}(\zeta_{p^n}-1)\right)-\ord_p\left(H_{\uI_0,\uJ_n,n}(\zeta_{p^n}-1)\right)\ge 1-\frac{1}{p^2-1}
\]
for all $\uJ\ne \uJ_n$ and $n\ge1$.
\end{lemm}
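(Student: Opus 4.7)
The strategy is to realize $C_v$ as a $p$-adic perturbation of the block anti-diagonal matrix $\tilde C_v=\left(\begin{smallmatrix}0&B_{v,1}\\B_{v,2}&0\end{smallmatrix}\right)$ treated in Lemma~\ref{lem:explicitHn} and then to expand the product $H_{v,n}(\epsilon_n)$ around the corresponding main term. Writing $C_v=\tilde C_v+pM_v$ with $M_v\in M_{2gf_v}(\Zp)$, and noting that $\det\tilde C_v\in\Zp^\times$, we obtain $C_v^{-1}=\tilde C_v^{-1}+pN_v$ with $N_v\in M_{2gf_v}(\Zp)$, and hence
\[
C_{v,k}(X)=\tilde C_{v,k}(X)+R_{v,k}(X),
\]
where $\tilde C_{v,k}(X)=\mathrm{diag}(I_{gf_v},\Phi_{p^k}(1+X)I_{gf_v})\,\tilde C_v^{-1}$ is precisely the matrix of the type handled by Lemma~\ref{lem:explicitHn} (after the natural identification of blocks), while $R_{v,k}(X)$ has top $gf_v$ rows with entries in $p\Zp[[X]]$ and bottom $gf_v$ rows with entries divisible by $p\Phi_{p^k}(1+X)$.

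Specializing at $X=\epsilon_n$, the identity $\Phi_{p^n}(\zeta_{p^n})=0$ forces the bottom rows of $C_{v,n}(\epsilon_n)$, and hence of $H_{v,n}(\epsilon_n)$, to vanish. Expanding
\[
H_{v,n}(\epsilon_n)=\prod_{k=n}^{1}\bigl(\tilde C_{v,k}(\epsilon_n)+R_{v,k}(\epsilon_n)\bigr)
\]
into $2^n$ monomials, the all-$\tilde C$ term equals $\tilde H_{v,n}(\epsilon_n)$, which by Lemma~\ref{lem:explicitHn} has a unique non-zero $gf_v\times gf_v$ block in its top half, supported in the columns indexed by $J_{v,n}$ and equal to $\delta_n$ times a unit matrix. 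Accordingly, collecting all-$\tilde C$ contributions at every prime yields $\ord_p(H_{\uI_0,\uJ_n,n}(\epsilon_n))=g[F:\Q]\cdot\ord_p(\delta_n)$ for the good minor. For a bad tuple $\uJ\neq\uJ_n$ there is a prime $v_0\mid p$ with $J_{v_0}\neq J_{v_0,n}$; at $v_0$ the all-$\tilde C$ contribution vanishes and $\det H_{v_0,n}(\epsilon_n)[\uI_{v_0,0},J_{v_0}]$ reduces to a sum of terms each containing at least one correction factor $R_{v_0,k}(\epsilon_n)$, bringing a factor of $p$. Primes $v\neq v_0$ contribute identical leading valuations $gf_v\cdot\ord_p(\delta_n)$ to both minors and cancel in the difference.

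The main obstacle is the precise $p$-adic bookkeeping inside a bad-minor term. Each substitution of $R_{v_0,k}$ for $\tilde C_{v_0,k}$ supplies a gain of $+1$ from the factor of $p$, but may forfeit the contribution $\ord_p(\Phi_{p^k}(\zeta_{p^n}))=1/p^{n-k}$ that $\tilde C_{v_0,k}(\epsilon_n)$ would have supplied to $\delta_n$, and it simultaneously re-routes the matrix product so that the remaining collection of $\Phi$-factors entering the term changes. One must verify that over all admissible $\tilde C/R$ patterns compatible with the bad column set $J_{v_0}$, the cumulative valuation loss is at most $\sum_{k\geq 1}1/p^{2k}=1/(p^2-1)$; this geometric tail arises because the $\tilde C$-factors supplying the smallest $\Phi_{p^k}$-valuations must alternate with $R$-factors in any non-vanishing arrangement, bounding the sum of contributions at the relevant positions. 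Combining the $+1$ gain from the factor of $p$ with this worst-case loss yields the stated lower bound, and all other monomials in the expansion have strictly larger valuations and do not affect the conclusion.
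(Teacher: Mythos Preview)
Your setup is sound: writing $C_v^{-1}=\tilde C_v^{-1}+pN_v$ and hence $C_{v,k}=\tilde C_{v,k}+R_{v,k}$ with $R_{v,k}(\epsilon_n)$ having all entries in $p\OO_{\Zp[\zeta_{p^n}]}$ is exactly the right perturbation. But the proof then goes off the rails in the final paragraph, where you announce a bookkeeping problem (``one must verify that over all admissible $\tilde C/R$ patterns \dots\ the cumulative valuation loss is at most $1/(p^2-1)$'') and never actually carry it out. Two concrete issues:

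\emph{(i) The bookkeeping is unnecessary.} Every $\tilde C_{v,k}(\epsilon_n)$ has entries in $\Zp[\zeta_{p^n}]$, and every $R_{v,k}(\epsilon_n)$ has entries in $p\Zp[\zeta_{p^n}]$. Therefore every monomial in your $2^n$-term expansion that contains at least one $R$-factor is, entry by entry, in $p\Zp[\zeta_{p^n}]$. Summing, you obtain directly
\[
H_{v,n}(\epsilon_n)=\tilde H_{v,n}(\epsilon_n)+O(p),
\]
where $O(p)$ denotes a matrix with all entries of valuation $\ge 1$. This is the paper's key observation, and it eliminates any need to track which $\Phi_{p^k}$-contributions are ``forfeited'' or ``re-routed''. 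Once you have this entrywise description, the top half of $H_{v,n}(\epsilon_n)$ has one $gf_v\times gf_v$ block equal to $\delta_n\cdot(\text{unit matrix})+O(p)$ and the other block equal to $O(p)$. A column-by-column Leibniz estimate then gives $\ord_p$ of the good local minor $=gf_v\ord_p(\delta_n)$ and $\ord_p$ of any bad local minor $\ge 1+(gf_v-1)\ord_p(\delta_n)$; the global difference is $\ge 1-\ord_p(\delta_n)$, and bounding $\ord_p(\delta_n)$ finishes.

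\emph{(ii) The heuristic you do offer is not an argument.} You write that the bad minor ``reduces to a sum of terms each containing at least one correction factor'', but the determinant of a sum of matrices is not the sum of their determinants, so this reduction needs either multilinearity in columns (producing an even larger sum you never analyse) or the entrywise bound above. The sentence about $\tilde C$-factors ``alternating with $R$-factors in any non-vanishing arrangement'' is suggestive but proves nothing as stated. Finally, your claim that primes $v\ne v_0$ ``contribute identical leading valuations\dots\ and cancel'' is only correct when $J_v=J_{v,n}$ there; in general the contribution at such $v$ is $\ge gf_v\ord_p(\delta_n)$, which still gives the inequality but is not an equality.

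In short: you have the right perturbation but are missing the one-line observation that collapses the combinatorics, and without it the proof is not complete.
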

\begin{proof}
By the calculations of Lemma~\ref{lem:explicitHn},
\[
H_{v,n}(\zeta_{p^n}-1)=
\begin{cases}
\left(
\begin{array}{c|c}
O(p)&\delta_n (B_{v,1}B_{v,2})^{(n-1)/2}B_{v,1}+O(p)\\ \hline
0&0
\end{array}
\right)&\text{if $n$ odd,}\\
\left(
\begin{array}{c|c}
\delta_n(B_{v,1}B_{v,2})^{n/2}+O(p)&O(p)\\ \hline
0&0
\end{array}
\right)
&\text{if $n$ even.}
\end{cases}
\]
Here, $O(p)$ denotes a matrix whose entries all have $p$-adic valuation $\ge1$. The explicit formula of $\delta_n$ given by 
\[
\ord_p(\delta_n)<\frac{1}{p^2}+\frac{1}{p^4}+\cdots=\frac{1}{p^2-1}<1.
\]
Therefore,
\[
H_{\uI_0,\uJ_n,n}(\zeta_{p^n}-1)=\det(\delta_n\star),
\]
where $\star$ is a matrix given by a product of $B_{v,1}$ and $B_{v,2}$. But the determinants of these  matrices are $p$-adic units, so
\[
\ord_p\left(H_{\uI_0,\uJ_n,n}(\zeta_{p^n}-1)\right)=g[F:\Q]\ord_p(\delta_n).
\]

If $\uJ\ne\uJ_n$, then there is at least one $v$ where the  $(I_{v,0},J_v)$-minor contains a column whose entries all have $p$-adic valuation $\ge1$. In the other columns, the entries all have $p$-adic valuation $\ge \ord_p(\delta_n)$. Therefore,
\[
\ord_p\left(H_{\uI_0,\uJ,n}(\zeta_{p^n}-1)\right)\ge 1+(g[F:\Q]-1)\ord_p(\delta_n).
\]
Hence, 
\[
\ord_p\left(H_{\uI_0,\uJ,n}(\zeta_{p^n}-1)\right)-\ord_p\left(H_{\uI_0,\uJ_n,n}(\zeta_{p^n}-1)\right)\ge 1-\ord_p(\delta_n)>1-\frac{1}{p^2-1},
\]
as required.
\end{proof}

\begin{lemm}\label{lem:specialcase2}
Suppose that $C_v$ is block anti-diagonal mod $p$ for all $v$ and that $\col_{T,\uI_0}(\bc)$ and $\col_{T,\uI_1}(\bc)$ are both non-zero. Furthermore, suppose that for all $\uJ$ such that  $\col_{T,\uJ}(\bc)\ne 0$, the $\mu$-invariant of  $\col_{T,\uJ}(\bc)$ is independent of $\uJ$. Then \eqref{eq:key} holds for $n\gg0$.
\end{lemm}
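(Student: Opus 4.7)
The strategy is to show that for $n \gg 0$ the sum $\sum_\uJ(H_{\uI_0,\uJ,n}\col_{T,\uJ}(\bc))(\theta)$ contains a uniquely dominant term by $p$-adic valuation, namely the one indexed by $\uJ_n$, so that its non-vanishing is forced by the ultrametric inequality.

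For each $\uJ$ with $\col_{T,\uJ}(\bc) \neq 0$, the plan is to apply Weierstrass preparation in $\Zp[[\Gamma]] \simeq \Zp[[X]]$ and write $\col_{T,\uJ}(\bc) = p^{\mu_0} u_\uJ(X) P_\uJ(X)$, where $u_\uJ$ is a unit, $P_\uJ$ is distinguished of degree $\lambda_\uJ$, and $\mu_0$ is the common $\mu$-invariant furnished by the hypothesis. A character $\theta$ of conductor $p^{n+1}$ trivial on $\Delta$ sends the fixed topological generator of $\Gamma$ to a primitive $p^n$-th root of unity, so $\ord_p(\theta(X)) = 1/((p-1)p^{n-1})$. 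For $n$ large enough, this valuation is smaller than $\ord_p(\alpha)$ for every non-zero root $\alpha$ of any of the finitely many $P_\uJ$'s, hence by the ultrametric inequality each linear factor of $P_\uJ$ evaluated at $\zeta_{p^n}-1$ has valuation exactly $\ord_p(\zeta_{p^n}-1)$, yielding
\[
\ord_p\bigl(\col_{T,\uJ}(\bc)(\theta)\bigr) = \mu_0 + \frac{\lambda_\uJ}{(p-1)p^{n-1}}.
\]

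Combining this with Lemma~\ref{lem:blockanti}, for every $\uJ \neq \uJ_n$ with $\col_{T,\uJ}(\bc) \neq 0$ one obtains
\[
\ord_p\bigl((H_{\uI_0,\uJ,n}\col_{T,\uJ}(\bc))(\theta)\bigr) - \ord_p\bigl((H_{\uI_0,\uJ_n,n}\col_{T,\uJ_n}(\bc))(\theta)\bigr) \ge 1 - \frac{1}{p^2-1} + \frac{\lambda_\uJ - \lambda_{\uJ_n}}{(p-1)p^{n-1}},
\]
which is strictly positive for $n$ sufficiently large since the last term decays to zero. Indices $\uJ$ with $\col_{T,\uJ}(\bc) = 0$ contribute nothing to the sum and can be ignored. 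The reference term itself is non-zero: $\col_{T,\uJ_n}(\bc)(\theta) \neq 0$ because $\uJ_n \in \{\uI_0,\uI_1\}$ (so $\col_{T,\uJ_n}(\bc) \neq 0$ by assumption) and its $p$-adic valuation at $\theta$ is finite by the formula above; meanwhile $H_{\uI_0,\uJ_n,n}(\theta) \neq 0$ follows from Lemma~\ref{lem:explicitHn} together with the estimate $\ord_p(\delta_n) < 1/(p^2-1) < 1$, which forces the relevant minor of $\delta_n \cdot (\text{unit matrix}) + O(p)$ to be non-zero. The $\uJ_n$-term is thus uniquely dominant and the ultrametric inequality yields \eqref{eq:key}.

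The main delicate point is that only the $\mu$-invariants are assumed to coincide, while the $\lambda$-invariants may differ across $\uJ$; the resulting discrepancy in the valuations of $\col_{T,\uJ}(\bc)(\theta)$ is of size $O(1/p^n)$, which is eventually dwarfed by the fixed positive gap $1 - 1/(p^2-1)$ provided by the block anti-diagonal mod $p$ structure exploited in Lemma~\ref{lem:blockanti}. This asymptotic comparison is precisely what makes the dominance argument, and hence the conclusion, work only for $n \gg 0$.
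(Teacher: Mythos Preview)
Your proof is correct and follows essentially the same approach as the paper: apply Weierstrass preparation to obtain $\ord_p(\col_{T,\uJ}(\bc)(\theta)) = \mu_0 + \lambda_\uJ/((p-1)p^{n-1})$, combine with Lemma~\ref{lem:blockanti} to see that the $\uJ_n$-term strictly dominates all others for $n \gg 0$, and conclude via the ultrametric inequality. You are in fact slightly more careful than the paper in explicitly checking non-vanishing of the dominant term and in noting that indices with $\col_{T,\uJ}(\bc)=0$ contribute nothing.
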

\begin{proof}
	Let $\mu$ be the common $\mu$-invariant among the non-zero $\col_{T,\uJ}(\bc)$ and write $\lambda_{\uJ}$ for the $\lambda$-invariant of $\col_{T,\uJ}(\bc)$. For $n\gg0$, by Weierstrass' preparation theorem~\cite[Theorem 7.3]{Washington97Book},
\[
\ord_p\left(\col_{T,\uJ}(\bc)(\zeta_{p^n}-1)\right)=\mu +\frac{\lambda_{\uJ}}{p^n-p^{n-1}}.
\]
Note that $\frac{\lambda_{\uJ}}{p^n-p^{n-1}}$ becomes arbitrarily small as $n\rightarrow \infty$. Therefore, on combining this with Lemma~\ref{lem:blockanti}, we deduce that
\[
0\ne\ord_p\left(H_{\uI_0,\uJ_n,n}\col_{T,\uJ_n}(\bc)(\zeta_{p^n}-1)\right)<\ord_p\left(H_{\uI_0,\uJ,n}\col_{T,\uJ}(\bc)(\zeta_{p^n}-1)\right)
\]
for all $\uJ\ne \uJ_n$ and $n\gg0$. Hence \eqref{eq:key} holds.
\end{proof}

\begin{coro}\label{cor:antidiagmodp}
Suppose that $C_v$ is block anti-diagonal mod $p$ for all $v$ and that $\sel_{\uI_0}(A/F_\infty)$ and $\sel_{\uI_1}(A/F_\infty)$ are both $\Zp[[\Gamma]]$-cotorsion. Furthermore, suppose that for all $\uJ$ such that  $\sel_{\uJ}(A/F_\infty)$ is $\Zp[[\Gamma]]$-cotorsion, the $\mu$-invariant of is $\sel_{\uJ}(A/F_\infty)^\vee$ independent of $\uJ$. Then,
\[
\rank_{\Zp}A^\vee(F_n)=O(1).
\]
\end{coro}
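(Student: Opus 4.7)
The plan is to verify the two hypotheses of Lemma~\ref{lem:specialcase2}---non-vanishing of $\col_{T,\uI_0}(\bc)$ and $\col_{T,\uI_1}(\bc)$, and constancy of the $\mu$-invariant of $\col_{T,\uJ}(\bc)$ across all $\uJ\in\II$ for which this Coleman image is nonzero---and then invoke Theorem~\ref{thm:key}. The first of these is immediate from Lemma~\ref{lem:nonzero} applied to the cotorsion assumptions on $\sel_{\uI_0}(A/F_\infty)$ and $\sel_{\uI_1}(A/F_\infty)$.

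For the second, I would first prove the converse of Lemma~\ref{lem:nonzero}: if $\col_{T,\uJ}(\bc)\neq 0$, then $\XX_\uJ(A^\vee/F_\infty)$ is $\Zp[[\Gamma]]$-torsion. This follows from the exact sequence~\eqref{diag:signedPT}: non-vanishing of $\col_{T,\uJ}(\bc)$ forces the image of $\coh^1_{\mathrm{Iw},\Sigma}(F,T)$ in $\prod_{v\mid p}\coh^1_{\mathrm{Iw}}(F_v,T)/\ker\col_{T,I_v}$ to be of full rank $g[F:\Q]$, so its cokernel is torsion, and combined with Lemma~\ref{lemm:PoitouTate}\eqref{lemm:PoitouTate1} this forces $\XX_\uJ$ to be torsion. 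Next, using~\eqref{diag:signedPT} again, when $\XX_\uJ$ is torsion the additivity of $\mu$-invariants along the exact sequence gives $\mu(\XX_\uJ)=\mu(\XX_0)+\mu(\coker\Phi_\uJ)$, where $\Phi_\uJ$ denotes the composition of localization with $(\col_{T,J_v})_{v\mid p}$. In turn, $\mu(\coker\Phi_\uJ)$ differs from $\mu(\col_{T,\uJ}(\bc))$ only by correction terms coming from the cotorsion quotient $\coh^1_{\mathrm{Iw},\Sigma}(F,T)/\langle c_1,\ldots,c_{g[F:\Q]}\rangle$ and from the $\Lambda$-indices of the images of the local Coleman maps $\col_{T,I_v}$ inside $\Lambda^{gf_v}$---each of which is, crucially, $\uJ$-independent. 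Consequently, the constancy of $\mu(\XX_\uJ)$ assumed in the corollary translates into the constancy of $\mu(\col_{T,\uJ}(\bc))$; Lemma~\ref{lem:specialcase2} then yields~\eqref{eq:key} for $n\gg0$, and Theorem~\ref{thm:key} concludes.

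The main obstacle is the verification of the $\uJ$-independence of the correction terms linking $\mu(\coker\Phi_\uJ)$ to $\mu(\col_{T,\uJ}(\bc))$. This should amount to a careful structural analysis of the Coleman maps $\col_{T,I_v}$ through the logarithmic matrix decomposition of \cite{BLIntegral}, together with a clean bookkeeping of $\mu$-invariants through Poitou--Tate, rather than any fundamentally new ingredient.
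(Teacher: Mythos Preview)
Your outline matches the paper's proof almost exactly: the paper compresses everything into the single assertion that~\eqref{diag:signedPT} forces $\mu(\col_{T,\uJ}(\bc))-\mu(\XX_{\uJ}(A^\vee/F_\infty))$ to be independent of $\uJ$, and then invokes Lemmas~\ref{lem:nonzero} and~\ref{lem:specialcase2}. Your explicit inclusion of the converse of Lemma~\ref{lem:nonzero} fills a small gap the paper leaves implicit (it is needed so that the index set $\{\uJ:\col_{T,\uJ}(\bc)\neq 0\}$ in Lemma~\ref{lem:specialcase2} is contained in the index set $\{\uJ:\XX_\uJ \text{ torsion}\}$ appearing in the corollary's hypothesis). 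One clarification on your ``main obstacle'': the cokernels of the local Coleman maps $\col_{T,I_v}$ genuinely depend on $I_v$, so they are \emph{not} literally $\uJ$-independent; what you actually need---and what the paper is tacitly using from the description of $\im\col_{T,I_v}$ in \cite{BLIntegral}---is that their $\mu$-invariants vanish for every $I_v$, which then makes the remaining correction term (coming from $\coh^1_{\mathrm{Iw},\Sigma}(F,T)/\langle c_1,\ldots,c_{g[F:\Q]}\rangle$) the only one, and that one is visibly $\uJ$-independent.
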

\begin{proof}
By Lemmas~\ref{lem:nonzero} and \ref{lem:specialcase2}, it is enough to establish the statement on the $\mu$-invariants. But \eqref{diag:signedPT} tells us that
\[
\mu\left(\col_{T,\uJ}(\bc)\right)-\mu\left(\sel_{\uJ}(A/F_\infty)^\vee\right)
\]
is independent of $\uJ$, so we are done.
\end{proof}

\begin{rema}
	In the case of elliptic curves with supersingular reduction at $p$, it is conjectured that the signed Selmer groups have zero $\mu$-invariants \cite[Conjecture 6.3]{Pollack} \cite[Conjecture 7.1]{PerrinRiou03}.
\end{rema}

\subsection{Abelian varieties of $\GL_2$-type}
We now assume that $A$ is an abelian variety defined over $\Q$ of $\GL_2$-type as defined in \cite{Ribet92}, that is, the algebra of $\Q$-endomorphisms of $A$ contains a number field $E$ of degree $[E:\Q]=\dim A$. We also assume that the ring of integers $\cO_E$ of $E$ is the ring of $\Q$-endomorphisms of $A$ and that $p$ is unramified in $E$.
In particular, the $p$-adic Tate module of $A$ splits into
\[
T\cong \bigoplus_{\p|p}T_\p(A),
\]
where the direct sum runs over all primes $\p$ of $E$ above $p$ and $T_\p(A)$ is a free $\cO_\p$-module of rank $2$ with $\cO_\p$ the completion of $\cO_E$ at $\p$.

Since $A$ is defined over $\Q$, we have $\Dcrisv(T_\p(A))=\Dcris(T_\p(A))\otimes \cO_v$, where $\Dcris(-)$ denotes the Dieudonn\'e module over $\Qp$. Therefore, it is suffisant to study the matrix of $\vp$ over $\Dcris(T_\p(A))$. The action of $\vp$ on $\Dcris(T_\p(A))$ is $\cO_\p$-linear turning $\Dcris(T_\p(A))$ into a rank-two filtered $\cO_\p$-module.

By considering the image of the Kummer map in $\coh^1(\Qp,T_\p(A))$, we see that the Hodge-Tate weights for this filtration are 0 and 1, each with multiplicity one.  Fontaine-Laffaille theory tells us that there exists an $\cO_\p$-basis of the form $\omega_{\p}$, $\vp(\omega_{\p})$, where $\omega_{\p}$ generates $\Fil^0\Dcris(T_\p(A))$. As $A$ is supersingular at $p$, the eigenvalues of $\vp$ are of the form $\zeta_i/\sqrt{p}$ $i=1,2$, where $\zeta_i$ is a root of unity.  But $p$ is unramified in $E_\p$. For the trace of $\vp$ to be an element of $E_\p$, $\zeta_1+\zeta_2$ must be an element of $p\cO_\p$. Therefore,  the matrix of $\vp$ with respect to the basis $\{\omega_\p,\vp(\omega_\p)\}$ is of the form $\begin{pmatrix}
0&\frac{b_\p}{p}\\
1&\frac{a_{\p}}{p}
\end{pmatrix}$
for some $a_{\p}\in p\cO_\p$ and $b_\p\in \cO_\p^\times$.
If we choose a $\Zp$-basis of $\cO_\p$, say $\{x_1,\ldots,x_{[E_\p:\Qp]}\}$, then this gives rise to a $\Zp$-basis of $\Dcris(T_\p(A))$, namely 
$$\left\{x_i\omega_{\p,v},x_i\vp(\omega_{\p,v}):i=1,\ldots,[E_\p:\Qp]\right\}.$$

Under this choice of bases, we see that the resulting matrix $C_v$ will be block anti-diagonal mod $p$ for all $v$. In particular, Corollary~\ref{cor:antidiagmodp}. Furthermore, if $a_{\p,v}=0$ for all $\p$ and $v$, then $C_v$ will even be block anti-diagonal. In this case, Corollary~\ref{cor:antidiag} applies.

\bibliographystyle{amsalpha}
\bibliography{references}
\end{document}